\newtheorem{thm}{Theorem}
\newtheorem{cor}[thm]{Corollary}
\newtheorem{lem}[thm]{Lemma}
\newtheorem{conj}[thm]{Conjecture}
\newtheorem{prop}[thm]{Proposition}
\newtheorem{rem}[thm]{Remark}
\newtheorem{ex}[thm]{Example}
\newcommand{\R}{\mathbf{R}} 
\newcommand{\Z}{\mathbf{Z}} 
\begin{document}
\title{On Coxeter mapping classes and fibered alternating links}
\author{Eriko Hironaka}
\author{Livio Liechti}
\thanks{\small 
\noindent 
The first author was partially supported by a grant from the Simons Foundation (\#209171). The second author was supported by the Swiss National Science Foundation (\#159208).}
\address{Eriko Hironaka, Department of Mathematics, Florida State University, Tallahassee, FL 32301-4510, USA}
\email{hironaka@math.fsu.edu}
\address{Livio Liechti, Mathematisches Institut der Universit\"at Bern, Sidlerstrasse 5, 3012 Bern, Schweiz}
\email{livio.liechti@math.unibe.ch}

\pagestyle{plain}

\begin{abstract}  Alternating-sign Hopf plumbing along a tree yields fibered alternating links whose homological monodromy is, up to a sign, conjugate to some alternating-sign Coxeter transformation.
Exploiting this tie, we obtain results about the location of zeros of the Alexander polynomial of the fibered link complement
 implying
a strong case of Hoste's conjecture, the trapezoidal conjecture, bi-orderability of the link group, 
 and a sharp lower bound for the homological dilatation of the monodromy of the fibration.
The results extend
to more general hyperbolic fibered 3-manifolds associated to alternating-sign Coxeter graphs.\end{abstract} 
\maketitle

\section{Introduction}
In this paper, we study mapping classes defined by  bipartite Coxeter graphs with  sign-labels on the vertices
determined by the bipartite structure.
 If the graph is connected and has at least two vertices, then these {\it alternating-sign Coxeter 
mapping classes} are pseudo-Anosov, and if the Coxeter graph is a tree the associated mapping class is
the monodromy of an alternating fibered knot or link, which we call an {\it (alternating) Coxeter link}.

There has long been interest in the location of roots of Alexander polynomials for alternating links.  Murasugi
showed that  the coefficients of the polynomials have alternating signs,
and hence no real root can be negative \cite{Mur}.  Hoste conjectured that the real part of all zeros 
must be bounded from below by $-1$.  This and related conjectures were settled for some classes of alternating
links in \cite{LM, KP, Sto, HM}.  

Using properties of alternating-sign Coxeter transformations, we give a simple proof that the roots of the Alexander
polynomials for alternating Coxeter links are real and positive.  By a result of Perron and Rolfsen \cite{PR},
this implies that the fundamental group of the complement of an alternating Coxeter link is
bi-orderable.   Applying an interlacing property for alternating-sign
Coxeter graphs, we show that the homological dilatations are monotone under graph inclusion.  Thus the minimum
homological dilatation achieved by an alternating Coxeter link is $\frac{3 + \sqrt{5}}{2}$, the square of the golden ratio.
Similar properties hold for the Alexander polynomial of the mapping torus of alternating-sign Coxeter mapping classes.

\begin{rem}{\em  In \cite{HM} Hirasawa and Murasugi similarly study the roots of Alexander polynomials for
quasi-rational knots and links, which include the Coxeter
links discussed in this paper, and they also prove stability and interlacing properties of the Alexander polynomial
for these examples.   By applying the constructs of Coxeter graphs and Coxeter transformations in this paper, we
simplify their proofs in this context, and extend the results to more general mapping classes
and mapping tori associated to alternating-sign Coxeter graphs.
}
\end{rem}

\subsection{Alexander polynomials of alternating knots and links}
The Alexander polynomial  $\Delta(t) \in \Z[t]$ is an invariant of a finitely presented group with a prescribed homomorphism onto $\Z$.
Given a knot or link $K$ in $S^3$, each oriented Seifert surface $S$ defines a surjective homomorphism of $\pi_1(S^3 \setminus K)$ to $\Z$ by algebraic intersection of closed paths with $S$. Denote by $\Delta_{S}(t)$ the associated Alexander polynomial.  
If $M = S^3 \setminus K$ 
is fibered over the circle with fiber $S$ and monodromy $\phi$, then $\Delta_{S}(t)$ is the characteristic polynomial of
the homological monodromy $\phi_{\mbox{hom}} : H_1(S;\R) \rightarrow H_1(S;\R)$ (this can be deduced from either the
Fox calculus or the Seifert algorithm for finding $\Delta_S(t)$, see e.g.~\cite{Rol}).  Given any mapping 
class $\phi$
on a surface $S$,  write $\Delta_{S,\phi}(t)$ for the characteristic polynomial of the homological monodromy.
It follows that if $K$ is a fibered link with monodromy $(S,\phi)$, and $\Delta_K(t)$ is the Alexander polynomial of $K$, we have
$$
\Delta_K(t) = \Delta_S(t) = \Delta_{S,\phi}(t).
$$

There are few restrictions on the Alexander polynomial:  any monic, reciprocal polynomial 
can be realized as $\Delta_{S,\phi}(t)$ up to multiples of $t$ and  $(t-1)$, where $(S,\phi)$ is the monodromy of some fibered link~\cite{Kan}.  The story is different when we confine ourselves to
{\it alternating knots and links}: those that admit
a planar projection such that over and under crossings are alternating.  Murasugi showed in \cite{Mur} that if $S$
is the Seifert surface defined by an alternating planar projection, then
$\Delta(-t)$ has degree $2g$, and the coefficients for the powers $t^{k}$ are all strictly positive or strictly negative for
$0 \leq k \leq 2g$.   This implies, for example, that any real root of $\Delta(t)$ must be positive.

In 2002, Hoste conjectured the following:

\begin{conj}[Hoste]  For alternating knots, the real part of any zero of the Alexander polynomial 
is strictly greater than~$-1$. 
\end{conj}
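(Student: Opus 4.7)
The plan is to try to import the paper's Coxeter-link technology into the general alternating setting, while being upfront that this is a longstanding open conjecture rather than something one expects to dispatch by a routine extension. Starting from an alternating diagram of $K$, Seifert's algorithm produces a spanning surface $S$ whose Seifert graph is bipartite (a direct consequence of the alternating hypothesis), and the associated Seifert matrix $V$ inherits an alternating sign pattern — this is exactly the structural input Murasugi used to show that the coefficients of $\Delta(-t)$ are of one sign.

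The second step is to recast Hoste's inequality as a spectral statement about the pencil built from $V$. Since $\Delta(t) = \det(tV - V^T)$, the substitution $s = t+1$ rewrites the polynomial as $\det(sV - (V+V^T))$, so the condition $\mathrm{Re}(t_0) > -1$ on every root of $\Delta$ is equivalent to asking that every generalized eigenvalue of the pencil $sV - (V+V^T)$ lie strictly in the right half-plane. This is attractive because $V + V^T$ is the symmetric intersection form on $S$, a genuine topological invariant with controlled signature, whereas only the asymmetric $V$ retains the sign pattern coming from the alternating diagram. The problem thus becomes: combine the alternating sign pattern of $V$ with the known behavior of the symmetric form $V+V^T$ to force the pencil into the right half-plane.

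The third step is the attempted generalization of the paper's Coxeter argument. For alternating Coxeter links, $V$ factors as $D + N$ with $D$ diagonal and $N$ a signed incidence matrix of a tree, so $V^{-T}V$ is literally an alternating-sign Coxeter transformation whose spectrum one can analyze via the reflection group description. For a general alternating knot the Seifert graph carries cycles and the corresponding Tait graph is an arbitrary planar bipartite graph, so one would have to prove a spectral-containment result for the Coxeter-type transformation attached to an arbitrary connected signed bipartite graph, with the weaker conclusion $\mathrm{Re}(\lambda) > -1$ (rather than reality and positivity, which is known to fail once cycles appear).

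The main obstacle is precisely this last step, and it is where every previous partial attack has stalled. In the tree case the product of reflections is essentially orthogonally similar to a controlled normal operator, and the half-plane bound follows from reality. Once cycles are present, eigenvalues genuinely leave the real axis, and one has to rule out any drift past the line $\mathrm{Re}(t) = -1$ using only the combinatorial data of a planar alternating projection. I would expect a serious proof to require a new planarity-sensitive ingredient — for example, a Perron--Frobenius-type statement for non-symmetric signed incidence matrices of planar bipartite graphs, or a Murasugi-sum decomposition reducing the problem to tree-like atomic pieces where the paper's Coxeter argument applies directly. Absent such an ingredient, the Coxeter method settles the conjecture exactly in the tree case treated in this paper and leaves the general statement open.
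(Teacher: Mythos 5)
You have not proved the statement, and you say so yourself --- but it is worth being explicit that no proof exists to compare against: in the paper this statement appears as a \emph{conjecture} (Hoste's conjecture), not a theorem, and the authors prove only a strong special case, namely that for alternating Coxeter links (and more generally for alternating-sign Coxeter mapping classes) the Alexander polynomial has all roots real and positive (Theorem~\ref{realroots-thm}), which trivially implies the $\mathrm{Re}(t_0)>-1$ bound for that restricted class. Your proposal correctly identifies this state of affairs. Your reformulation is also sound as far as it goes: writing $\Delta(t)=\det(tV-V^{T})$ and substituting $s=t+1$ does convert Hoste's inequality into the assertion that the pencil $sV-(V+V^{T})$ has all generalized eigenvalues in the open right half-plane, and the bipartiteness of the Seifert graph of an alternating diagram is the correct structural input (it is what underlies both Murasugi's sign-alternation result and the paper's identification of the monodromy with $-C_{+-}$ in the tree case).

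The genuine gap is the one you name: there is no known spectral mechanism that confines the eigenvalues of $V^{-T}V$ (or of the pencil above) to $\mathrm{Re}>0$ for an arbitrary planar bipartite Seifert graph. Note one refinement of your diagnosis: the paper's Proposition~\ref{bicolour-prop} does \emph{not} require the graph to be a tree --- real negativity of the spectrum of $C_{+-}$ holds for any bipartite alternating-sign Coxeter graph, cycles included, because $C_{+-}$ is symmetric in that normalization and its spectrum is controlled by $-A(\Gamma)^2$. What fails in general is not the graph-theoretic step but the \emph{geometric realization}: a general alternating knot's Seifert matrix is not of the form $-C_{+}$ for any alternating-sign Coxeter graph (the off-diagonal entries need not come from a fillable plumbing pattern, and indeed generic alternating Alexander polynomials have non-real roots, so no such identification is possible). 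So the missing ingredient is not a Perron--Frobenius statement for signed incidence matrices of graphs with cycles, but a replacement for the entire Coxeter dictionary once the Seifert matrix leaves the Hopf-plumbing class. Your proposal is an honest and essentially accurate map of why the conjecture remains open; it is not, and does not claim to be, a proof.
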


\noindent
A lower bound on the real part of roots of $\Delta(t)$ was found by Lyubich and Murasugi~\cite{LM} 
for two-bridge links.  The results were later improved by Koseleff and Pecker~\cite{KP},  and Stoimenow~\cite{Sto}.
Hirasawa and Murasugi in \cite{HM} showed that for a large class of alternating links, the roots of the Alexander
polynomial are real and positive, a property of integer polynomials known as {\it real stability}.

Our first result is the following.

\begin{thm}\label{realroots-thm} If $(S,\phi)$ is an alternating-sign Coxeter mapping class, then 
$\Delta_{(S,\phi)}(t)$ has real stability.   In particular, the  Alexander polynomial of an alternating Coxeter link has real stability.
\end{thm}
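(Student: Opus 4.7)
The plan is to exploit the bipartition $V = V_+ \sqcup V_-$ of the Coxeter graph to put $\phi_{\mbox{hom}}$ into a $2\times 2$ block form and then reduce its characteristic polynomial to the spectrum of a positive semi-definite matrix. I would order a natural basis of $H_1(S;\R)$ (the cycles indexed by vertices, for instance the cores of the plumbed Hopf bands in the link case) so that the $V_+$-cycles come before the $V_-$-cycles, and write $\phi = \phi_+ \circ \phi_-$, where $\phi_\pm$ is the composition of Dehn twists along the cycles indexed by $V_\pm$, with signs prescribed by the alternating-sign labeling. Cycles within the same color class are disjoint, so the twists in $\phi_\pm$ pairwise commute, and the transvection formula $\tau_c^\epsilon(x) = x + \epsilon\langle x, c\rangle c$ forces each $\phi_{\pm,\mbox{hom}}$ to be unipotent and block-triangular,
$$
\phi_{+,\mbox{hom}} = \begin{pmatrix} I & -B \\ 0 & I \end{pmatrix}, \qquad \phi_{-,\mbox{hom}} = \begin{pmatrix} I & 0 \\ -B^T & I \end{pmatrix},
$$
where $B$ is the $|V_+|\times|V_-|$ matrix recording the weighted bipartite intersections. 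I would first check on the $A_2$ model (figure-eight knot) that the alternating-sign convention aligns signs so that both off-diagonal blocks carry the same sign; this is exactly what distinguishes the situation from the classical bipartite Coxeter element.

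Multiplying the two blocks gives
$$
\phi_{\mbox{hom}} = \begin{pmatrix} I + BB^T & -B \\ -B^T & I \end{pmatrix},
$$
and a Schur complement applied to the lower-right block $(t-1)I$ in $tI - \phi_{\mbox{hom}}$ shows that the eigenvalues of $\phi_{\mbox{hom}}$ are $1$ (with suitable multiplicity) together with the roots of
$$
t^2 - (2+\mu)\,t + 1 = 0
$$
as $\mu$ ranges over the nonzero eigenvalues of $BB^T$. Since $BB^T$ is symmetric and positive semi-definite, each such $\mu > 0$ yields discriminant $(2+\mu)^2 - 4 = \mu(\mu+4) > 0$, so both roots of the quadratic are real; they have product $1$ and positive sum $2+\mu$, hence are strictly positive and mutual reciprocals. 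Consequently every zero of $\Delta_{S,\phi}(t)$ is a positive real number, which is real stability.

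The main obstacle is Step 1, verifying that the alternating-sign convention genuinely produces $+BB^T$ rather than $-BB^T$ in the $(1,1)$-block above. With the opposite sign pattern the analogous quadratics become $t^2 - (2-\mu)\,t + 1$, whose roots lie on the unit circle for $\mu \in (0,4)$---this is exactly the classical bipartite Coxeter element. The alternating-sign labels are designed precisely to negate the relevant sign, but confirming this matching in full generality (at the level of the intersection form together with the vertex sign-labels) is the only step that is not purely routine linear algebra. Once the sign matching is established, the conclusion reduces to the one-line Schur identity together with the non-negativity of the spectrum of $BB^T$.
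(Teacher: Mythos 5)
Your computation is essentially the paper's argument in different notation. The paper orders the vertices by sign, writes
$C_{+} = \bigl(\begin{smallmatrix} -I & X\\ 0 & I \end{smallmatrix}\bigr)$,
$C_{-} = \bigl(\begin{smallmatrix} I & 0\\ -X^\top & -I \end{smallmatrix}\bigr)$,
notes that $C_{+-}=C_+C_-$ is symmetric, and derives $2+\lambda+\lambda^{-1}=-\alpha^2$ with $\alpha$ an eigenvalue of the adjacency matrix (Proposition~\ref{bicolour-prop}); since $A^2=\mathrm{diag}(XX^\top,X^\top X)$, the substitution $\lambda=-\nu$, $\mu=\alpha^2$ turns this into exactly your quadratic $\nu^2-(2+\mu)\nu+1=0$ with $\mu$ ranging over the spectrum of $BB^\top$. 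So your Schur complement and the paper's identity $(C_++C_-)^2=-A^2=2I+C_{+-}+C_{+-}^{-1}$ are the same calculation. The sign issue you defer does resolve the way you hope, and the verification is one line: with $x_{vw}=\langle\gamma_v,\gamma_w\rangle$ for $v\in V_+$, $w\in V_-$, the positive twists give $\tau_{\gamma_v}(\gamma_w)=\gamma_w+\langle\gamma_w,\gamma_v\rangle\gamma_v=\gamma_w-x_{vw}\gamma_v$ while the negative twists give $\tau_{\gamma_w}^{-1}(\gamma_v)=\gamma_v-\langle\gamma_v,\gamma_w\rangle\gamma_w=\gamma_v-x_{vw}\gamma_w$; the antisymmetry of the intersection form exactly compensates the opposite twist directions, so both off-diagonal blocks carry the same matrix $-X$, $-X^\top$, and the $(1,1)$-block of the product is $I+XX^\top$ rather than $I-XX^\top$. (In the classical all-positive case the lower-left block is $+X^\top$, which is where the unit-circle eigenvalues come from.)

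The one genuine loose end is your assumption that the curves indexed by the vertices form a basis of $H_1(S;\R)$. That is true for Coxeter links, where $S$ is a tree-plumbing of Hopf bands and the core curves are a homology basis, so your proof covers the ``in particular'' clause. But the theorem is stated for arbitrary alternating-sign Coxeter mapping classes, where the filling curve system on $S$ need be neither linearly independent nor spanning in $H_1(S;\R)$. The paper handles this in Lemma~\ref{roots-lem}: the map $\R^{V_\Gamma}\to H_1(S;\R)$ intertwines $-C_{+-}$ with $\phi_*$ but need not be injective or surjective, and the cokernel is generated by boundary-parallel classes fixed by $\phi_*$, so every eigenvalue of $\phi_*$ not captured by your block computation equals $1$ --- harmless for real stability, but a step you need to add to cover the general case.
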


Fox's \emph{trapezoidal conjecture} concerns the coefficients of Alexander polynomials of alternating knots.

\begin{conj}[\cite{Fox}]
Let $\Delta(t) = a_{2g}t^{2g} + \dots + a_0$ be the Alexander polynomial of an alternating knot. 
Then there exists an integer $k$ satisfying $0\le k\le g$ such that $$|a_0|< \dots <|a_k| = \dots = |a_{2g-k}| > \dots > |a_{2g}|.$$
\end{conj}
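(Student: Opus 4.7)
The plan is to combine Theorem~\ref{realroots-thm} with the reciprocal symmetry of $\Delta_{(S,\phi)}$ to reduce the trapezoidal conjecture, in the setting of alternating Coxeter links, to an elementary statement about coefficient sequences of real-rooted polynomials. By Theorem~\ref{realroots-thm}, every root of $\Delta(t)$ is real and strictly positive; since $\Delta(t)$ is monic and reciprocal of even degree $2g$, its roots pair as $\{r_i,r_i^{-1}\}$ and
\[
\Delta(t) = \prod_{i=1}^{g}\bigl(t^2 - s_i t + 1\bigr),\qquad s_i := r_i + r_i^{-1} \ge 2
\]
by AM-GM. Passing to $\tilde\Delta(t) := \Delta(-t) = \prod_{i=1}^{g}(t^2 + s_i t + 1)$ turns Murasugi's alternating-sign pattern into a sequence of positive coefficients $b_k = |a_k|$, so the conjecture becomes the claim that any such positive-coefficient product with every $s_i \ge 2$ has a trapezoidal coefficient sequence.

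The second step is an induction on $g$. The base $g=1$ is immediate, giving $(1,s_1,1)$. For the inductive step, set $\tilde\Delta_g(t) = (t^2 + s t + 1)\tilde\Delta_{g-1}(t)$ so that the new coefficients satisfy $c_j = b_{j-2} + s\,b_{j-1} + b_j$, and analyze the forward differences
\[
c_{j+1} - c_j = (b_{j-1}-b_{j-2}) + s\,(b_j-b_{j-1}) + (b_{j+1}-b_j).
\]
The inductive sign pattern of $b_{k+1} - b_k$, positive on the ascent, zero on the plateau, negative on the descent, dictates the sign of $c_{j+1} - c_j$; the inequality $s \ge 2$ is exactly what makes the middle term dominate when the outer two disagree in sign near the boundary of the plateau, thereby preserving trapezoidality and keeping the plateau centered at the midpoint.

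The main obstacle is this inductive coefficient estimate. The bound $s \ge 2$ is tight, corresponding to the degenerate case $r_i = 1$ where the factor is $(t+1)^2$ with sequence $(1,2,1)$, and any weaker hypothesis would fail. If the direct induction becomes unwieldy, a cleaner alternative is the substitution $u = t + t^{-1}$, under which $\tilde\Delta(t) = t^{g}Q(u)$ with $Q(u) = \prod_i (u + s_i)$ real-rooted in $(-\infty, -2]$; expanding $u^k$ in the Chebyshev-like basis $t^k + t^{k-2} + \cdots + t^{-k}$ and invoking classical interlacing for polynomials with roots in such a half-line recovers the trapezoidal property directly. This is the path followed by Hirasawa and Murasugi in the quasi-rational setting~\cite{HM}, and Theorem~\ref{realroots-thm} is precisely what makes it applicable to every alternating Coxeter link.
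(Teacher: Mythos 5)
First, a point of scope: the statement you were asked about is Fox's conjecture itself, which is open in general; neither the paper nor your proposal proves it for all alternating knots. Both only establish the trapezoidal property for the special class covered by Theorem~\ref{realroots-thm} (this is Corollary~\ref{trapezoidal-thm} in the paper), and you correctly identify real stability as the engine. Within that scope, however, your route is genuinely different from, and considerably longer than, the paper's. The paper's entire argument is: a real polynomial whose roots are all real and of one sign has a strictly log-concave coefficient sequence $|a_i|^2 > |a_{i-1}||a_{i+1}|$ (Newton's inequality, \cite{Wag}), and a strictly log-concave positive sequence is strictly increasing, then strictly decreasing, with at most two consecutive equal values; combined with the reciprocal symmetry $|a_i| = |a_{2g-i}|$ this is exactly the trapezoidal shape (in fact with plateau reduced to the single middle coefficient). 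No factorization into quadratics and no induction is needed.

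Your proposed induction on the factors $t^2 + s_i t + 1$ can be made to work --- the sign analysis of $c_{j+1}-c_j$ goes through, though the decisive mechanism at the plateau boundary is not ``the middle term dominates'' but the anti-symmetry of the difference sequence $d_j = b_{j+1}-b_j$ (the problematic windows $(+,0,-)$ occur only at the exact center, where anti-symmetry forces the sum to vanish); the bound $s\ge 2$ is only needed for windows like $(+,+,-)$ at a sharp peak, where $s>1$ already suffices. But as written this step is a sketch, and you yourself flag it as the main obstacle, so the proposal is incomplete at its crux. Two further cautions: the factorization $\Delta(t)=\prod(t^2-s_it+1)$ silently assumes even degree and no root at $1$ --- fine for knots since $\Delta(1)=\pm1$, but not for the odd-degree link examples the paper also treats, where a linear factor intervenes; and ``trapezoidal'' is a weaker inductive hypothesis than the log-concavity you actually get for free, so you are discarding information that would make the induction trivial. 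The short path is: take absolute values (Murasugi's alternating-sign theorem), invoke Newton's inequality, and read off the trapezoid.
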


\noindent
The trapezoidal conjecture has been verified for several classes of alternating knots, 
e.g.\ for algebraic alternating knots by Murasugi~\cite{Mur2} and alternating knots of genus two by Ozsv\'{a}th and Szab\'{o}~\cite{OZ} and Jong~\cite{Jo}.

Real stability implies the trapezoidal property for  integer polynomials.
The coefficient sequence of a polynomial  $a_{2g}t^{2g} + \dots + a_0 \in \R[t]$ with only positive real roots
is strictly \emph{log-concave}, 
i.e. $$a_i^2 > a_{i-1}a_{i+1}$$ holds for all $i=2, \dots ,2g-1$, see e.g.~\cite{Wag}.
 Thus, the trapezoidal property  of Alexander polynomials 
 of alternating Coxeter links 
 follows from  Theorem~\ref{realroots-thm} (cf.~\cite{HM}).  More generally, we have the following.

\begin{cor}\label{trapezoidal-thm} If $(S,\phi)$ is an alternating-sign Coxeter mapping class, then $\Delta_{(S,\phi)}(t)$ 
is trapezoidal.  In particular, alternating-sign Coxeter links have trapezoidal Alexander polynomials.
\end{cor}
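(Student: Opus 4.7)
The plan is to deduce the trapezoidal property directly from Theorem~\ref{realroots-thm}, using two auxiliary inputs: Newton's log-concavity inequalities for elementary symmetric polynomials in positive reals, and the reciprocity of $\Delta_{(S,\phi)}(t)$.

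First I would apply Theorem~\ref{realroots-thm} to factor
$$\Delta_{(S,\phi)}(t) = \pm\prod_{i=1}^{2g}(t - r_i),\qquad r_i > 0.$$
Writing $\Delta_{(S,\phi)}(t) = a_{2g}t^{2g} + \dots + a_0$, the absolute values $|a_k| = e_{2g-k}(r_1,\dots,r_{2g})$ are elementary symmetric polynomials in positive reals. The log-concavity statement from \cite{Wag} recalled immediately before the corollary then yields strict log-concavity of the coefficient sequence,
$$|a_i|^2 \;>\; |a_{i-1}|\cdot|a_{i+1}|, \qquad 1 \le i \le 2g-1.$$
An elementary argument (the ratios $|a_i|/|a_{i-1}|$ are strictly decreasing) shows that a strictly log-concave positive sequence is strictly unimodal, with a peak consisting of either one index or two consecutive indices.

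Second, I would use reciprocity $|a_i| = |a_{2g-i}|$ of the characteristic polynomial, which comes from preservation of the algebraic intersection pairing by the homological monodromy. Combined with strict log-concavity the two-consecutive-indices case would give equal values at $\{i,i+1\}$ and $\{2g-i-1,2g-i\}$, which is impossible by parity; the peak is therefore pinned to the central index $i=g$, producing
$$|a_0| < |a_1| < \cdots < |a_g| > \cdots > |a_{2g-1}| > |a_{2g}|,$$
i.e., the trapezoidal condition with the plateau degenerating to the middle coefficient.

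The real content is carried entirely by Theorem~\ref{realroots-thm}, so I do not expect a genuine obstacle. The only point to double-check is reciprocity of $\Delta_{(S,\phi)}(t)$ in the general alternating-sign Coxeter setting (beyond the classical knot- or link-complement case), but this follows from the preservation of the associated symmetric bilinear form by the Coxeter transformation.
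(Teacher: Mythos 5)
Your proposal follows essentially the same route as the paper: the paper's entire argument is the paragraph preceding the corollary, namely that Theorem~\ref{realroots-thm} gives positive real roots, Newton's inequalities (\cite{Wag}) give strict log-concavity of the coefficient sequence, and log-concavity yields the trapezoidal shape. Your additional step --- using reciprocity of $\Delta_{(S,\phi)}(t)$ to force the (length at most two) plateau to sit symmetrically about the middle --- is a detail the paper glosses over, and it is genuinely needed to match the stated form of the trapezoidal condition, so it is a welcome addition. One correction, though: your parity argument that the plateau must degenerate to a single central coefficient is only valid when the degree is even. For an alternating-sign Coxeter mapping class the degree of $\Delta_{(S,\phi)}(t)$ equals the number of vertices of $\Gamma$ and can be odd (the paper's own example has $\Delta(t)=t^5-10t^4+27t^3-27t^2+10t-1$, with coefficient moduli $1,10,27,27,10,1$), in which case reciprocity forces $|a_m|=|a_{m+1}|$ at the two middle indices and the plateau has length exactly two. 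This does not affect the trapezoidal conclusion --- strict log-concavity plus reciprocity still gives a centered plateau of length one or two --- but the strict inequality at the center that you assert does not hold in general. A second, minor point: reciprocity is cleanest to extract not from nondegeneracy of the bilinear form $B$ (which can fail) but from the decomposition $C_{+-}=-M(M^{T})^{-1}$ with $M$ invertible, which the paper records in the proof of Proposition~\ref{homdil-prop}.
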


\subsection{Bi-orderable groups}

A second application of Theorem~\ref{realroots-thm} is the bi-orderability of knot groups and fundamental
groups of 3-manifolds.

A group $G$ is \emph{bi-orderable} if it admits a total order $<$ on $G$ that is compatible with the group operation, that is
$$
a \leq b \ \mbox{and}\ c \leq d \qquad \mbox{implies} \qquad ac  \leq bd.
$$
Perron and Rolfsen showed that if all the eigenvalues of the homological action of a surface homeomorphism $\phi$ are real and positive, 
then the fundamental group of its mapping torus is bi-orderable~\cite{PR, PR2}.  
Thus, Theorem~\ref{realroots-thm} has this immediate consequence.

\begin{cor}\label{biorderable-thm}
The mapping torus of an alternating-sign Coxeter mapping class has bi-orderable fundamental group.
\end{cor}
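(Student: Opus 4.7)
The plan is to chain together Theorem~\ref{realroots-thm} with the Perron--Rolfsen criterion cited just above the statement of the corollary; essentially no additional work is required, so the ``proof'' is really a short verification that the hypotheses of \cite{PR, PR2} are met.

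First, I would invoke Theorem~\ref{realroots-thm} to conclude that for an alternating-sign Coxeter mapping class $(S,\phi)$, the polynomial $\Delta_{(S,\phi)}(t)$ has real stability, i.e.\ all of its complex zeros are real and strictly positive. Next, by the identification of $\Delta_{(S,\phi)}(t)$ with the characteristic polynomial of the homological monodromy $\phi_{\mathrm{hom}}\colon H_1(S;\R)\to H_1(S;\R)$ recalled in the introduction, this means that every eigenvalue of $\phi_{\mathrm{hom}}$ is a positive real number.

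At this point the Perron--Rolfsen theorem \cite{PR, PR2} applies verbatim: whenever $\phi$ is a surface homeomorphism whose induced action on first homology has only real positive eigenvalues, the fundamental group of the mapping torus $M_\phi = S \times [0,1]/(x,1)\sim(\phi(x),0)$ admits a bi-invariant total order. Applying this to our $(S,\phi)$ yields the claim.

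The only potentially delicate point is making sure that the Perron--Rolfsen hypothesis is truly about real positive eigenvalues (rather than, say, requiring a stronger spectral condition), and that it applies to the mapping tori relevant here without any orientability or connectedness caveats beyond what is already built into the definition of an alternating-sign Coxeter mapping class. Since the authors flagged the corollary as an ``immediate consequence'' of Theorem~\ref{realroots-thm} and the cited results, I do not expect any further obstacle, and the argument should amount to two or three lines.
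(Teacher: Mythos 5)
Your proposal is correct and follows exactly the paper's own reasoning: the corollary is obtained by combining Theorem~\ref{realroots-thm} (all eigenvalues of the homological monodromy are real and strictly positive) with the Perron--Rolfsen criterion from \cite{PR, PR2}, which is precisely how the authors present it as an ``immediate consequence.'' No further comment is needed.
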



\subsection{Dilatations of mapping classes}
A {\it mapping class} on an oriented compact surface $S$ of finite type is a self-homeomorphism up to isotopy relative
to the boundary.  The {\it homological dilatation} $\lambda_{\mbox{hom}}$
of a mapping class $\phi$ is the largest eigenvalue (in modulus) of the
characteristic polynomial of the action of $\phi$ on first homology.
By the Nielsen-Thurston classification theorem, mapping classes fall into three types: those that
are periodic, non-periodic but preserving the isotopy class of a simple closed multi-curve, and {\it pseudo-Anosov}.
The third type is the most general, and has the property that for some pair of transverse measured singular foliations
$(\mathcal F^\pm, \nu^\pm)$, the mapping class stretches the measure $\nu^-$ by $\lambda$ and $\nu^+$ by
$\lambda^{-1}$ for some $\lambda > 1$.  
The constant  $\lambda_{\mbox{geo}} = \lambda$ is the {\it (geometric) dilatation} of the mapping class.
The homological and geometric dilatations are related as follows
$$
\lambda_{\mbox{hom}}(\phi) \leq \lambda_{\mbox{geo}}(\phi),
$$
with equality if and only if $\phi$ is {\it orientable}, i.e., its
invariant foliations $\mathcal F^{\pm}$ are orientable (see, e.g.~\cite{FM}).  

The mapping torus of a mapping class $(S,\phi)$  is the 3-dimensional manifold 
 $$
 M = M_{(S,\phi)} = S \times [0,1]/(x,1) \sim (\phi(x),0).
 $$
By a theorem of W. Thurston, this manifold admits a hyperbolic structure
 if and only if $\phi$ is pseudo-Anosov \cite{Thu}.   The associated fibration $M \rightarrow S^1$ defines a
surjective homomorphism $\pi_1(M) \rightarrow \Z$ and a corresponding Alexander polynomial  $\Delta_{(S,\phi)}(t)$.

We show that the dilatation of alternating-sign Coxeter mapping classes is monotonic with respect
to graph inclusion.  Thus the minimum dilatation for alternating-sign Coxeter mapping classes
is achieved by the alternating-sign $A_2$ graph, which in turn is geometrically realized by the
figure eight knot.

\begin{thm}
\label{goldenmean-thm} 
The minimum homological and geometric dilatation of alternating-sign Coxeter mapping classes is 
the square of the golden ratio $\frac{3 + \sqrt{5}}{2}$, and is geometrically realized as the monodromy of the figure eight knot.
\end{thm}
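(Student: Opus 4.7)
The plan is to combine the monotonicity of homological dilatation under Coxeter graph inclusion, alluded to in the introduction and arising from an interlacing property for alternating-sign Coxeter graphs, with an explicit computation for the smallest admissible graph. First, I would observe that every connected bipartite Coxeter graph on at least two vertices contains the graph $A_2$ (two vertices joined by a single edge) as an induced subgraph. Invoking the monotonicity property, one obtains
\[
\lambda_{\mbox{hom}}(\phi) \geq \lambda_{\mbox{hom}}(A_2)
\]
for every alternating-sign Coxeter mapping class $\phi$. Together with the universal inequality $\lambda_{\mbox{geo}} \geq \lambda_{\mbox{hom}}$ recalled in the introduction, this yields the same lower bound on the geometric dilatation.

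The next step is to compute $\lambda_{\mbox{hom}}(A_2)$. The alternating-sign Coxeter transformation associated with a two-vertex bipartite graph acts on a rank two lattice as a product of two sign-modified involutions; a direct two-by-two calculation shows that its characteristic polynomial is $t^{2}-3t+1$, whose larger root is $\frac{3+\sqrt{5}}{2}$. Thus every alternating-sign Coxeter mapping class has homological dilatation at least $\frac{3+\sqrt{5}}{2}$.

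Finally, I would identify the $A_{2}$ alternating-sign Coxeter link with the figure eight knot. Alternating-sign Hopf plumbing along the edge of $A_{2}$ means plumbing two Hopf bands of opposite signs along a single arc, which is the classical description of the genus one Seifert surface of the figure eight knot. The resulting monodromy is a pseudo-Anosov mapping class on the once-punctured torus whose invariant foliations are orientable (as is automatic on the torus, or alternatively deduced from Theorem~\ref{realroots-thm} since all homological eigenvalues are real and positive). Therefore $\lambda_{\mbox{geo}}=\lambda_{\mbox{hom}}=\frac{3+\sqrt{5}}{2}$ for this example, so the lower bound is sharp and realized both homologically and geometrically by the figure eight knot.

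The main obstacle is the monotonicity step, which hinges on an eigenvalue interlacing argument for the bilinear form encoded by the Coxeter graph; I would treat it as a black box supplied by the earlier sections of the paper. Modulo that input, the remainder of the argument reduces to the routine two-by-two calculation for $A_2$ and the recognition of the associated Hopf plumbing as the figure eight knot fibration.
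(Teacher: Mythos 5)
Your proposal is correct and follows essentially the same route as the paper: reduce to the alternating-sign $A_2$ graph via the interlacing/monotonicity results of Section~\ref{Coxeter-sec} (the paper's Propositions~\ref{interlacing-prop} and~\ref{minimum-prop}, transferred to mapping classes via Lemma~\ref{roots-lem}), compute the $2\times 2$ case to get $t^2-3t+1$, and realize the bound by the figure eight knot, whose orientable invariant foliations give $\lambda_{\mbox{geo}}=\lambda_{\mbox{hom}}$. The only difference is that you spell out the $A_2$ computation and the figure-eight identification explicitly, whereas the paper cites its earlier propositions.
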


\begin{rem} \label{Lehmer-rem}  {\em By a result of McMullen \cite{McM}
the spectral radius of the classical Coxeter transformations is minimized by
the $E_{10}$ Coxeter graph, also known as the $(2,3,7)$ star-like graph \cite{MRS}. The associated Coxeter link is
 the $(-2,3,7)$-pretzel link \cite{Hi1} and the dilatation of its monodromy is the conjectural smallest Salem number,
 known as Lehmer's number \cite{Le}, which is smaller than the square of the golden ratio.}
\end{rem}

\begin{rem} {\em By contrast to Theorem~\ref{goldenmean-thm}, when dropping the assumption of alternating signs,
it is possible to find mixed-sign Coxeter graphs whose associated mapping classes have dilatation arbitrarily close to 1~(see \cite{Hi2}).}
\end{rem}

\subsection{Organization} In Section~\ref{Coxeter-sec} we recall some definitions and properties of classical Coxeter systems
and generalize them to mixed-sign Coxeter systems.   The analog of Alexander polynomials for Coxeter systems is
the Coxeter polynomial, the characteristic polynomial of the Coxeter transformation. 
For bipartite alternating-sign Coxeter systems, we prove real stability 
for the Coxeter polynomial and the interlacing property.
   Section~\ref{geometry-sec} discusses geometric realizations of alternating-sign
Coxeter systems and
contains proofs of Theorems~\ref{realroots-thm} and~\ref{goldenmean-thm}.
\medskip

\noindent
{\bf Acknowledgements.} The authors are grateful to N.\ A'Campo, S.\ Baader, K.\ Murasugi and to the anonymous referee
for their helpful comments and suggestions.

\section{Bipartite Coxeter graphs}\label{Coxeter-sec}

A \emph{mixed-sign Coxeter graph} is a pair $(\Gamma, \mathfrak{s})$, 
where $\Gamma$ is a finite connected graph without self- or double edges and $\mathfrak{s}$ is an assignment of a sign $+$ or $-$ to every vertex $v_i$ of $\Gamma$. 
Let $\R^{V_\Gamma}$ be the vector space of $\R$-labelings 
of the vertices of $\Gamma$.  For $v \in V_\Gamma$, let $[v]$ be the corresponding element of $\R^{V_\Gamma}$ giving the label
$1$ on $v$ and $0$ on all other vertices of $\Gamma$.
The real vector space $\R^{V_{\Gamma}}$ is equipped with a symmetric bilinear form $B$, 
given by $B([v_i], [v_i]) = -2\cdot\mathfrak{s}(v_i)$ and otherwise $B([v_i], [v_j]) = a_{ij}$, where $A = (a_{ij})$ is the adjacency matrix of $\Gamma$.
To every vertex $v_i$, we associate a reflexion $s_i$ about the hyperplane of $\R^{V_{\Gamma}}$ perpendicular to $[v_i]$, given by the formula $$s_i([v_j]) = [v_j] - 2\frac{B([v_i],[v_j])}{B([v_i],[v_i])}[v_i].$$
The \emph{Coxeter transformation} is the product $C = s_1 \cdots s_n$ of all these reflections. 
For trees, this product does not depend, up to conjugation, on the order of multiplication~\cite{Ste}, but in general it does. 
For bipartite Coxeter graphs $\Gamma$, however, there is a distinguished conjugacy class, the \emph{bipartite Coxeter transformation} $C_{+-}$ given by $C_{+-} = C_+  C_-$, 
where $C_+$ is any product of all the reflections corresponding to vertices in one part of the partition and $C_-$ is any product of all the reflections corresponding to vertices in the other part.
This is well-defined since all the reflections corresponding to vertices in one part of the partition commute pairwise. 

If all signs $\mathfrak{s}$ of a bipartite Coxeter graph are positive, theorems of A'Campo and McMullen state that the eigenvalues of the bipartite Coxeter transformation are on the unit circle or positive real 
and that the spectral radius is monotonic with respect to graph inclusion~\cite{AC1, McM}.
We now prove analogs of these theorems for \emph{alternating-sign Coxeter graphs}, the case where the bipartition of the graph $\Gamma$ is actually given by the signs $\mathfrak{s}$.

\begin{prop} 
\label{bicolour-prop}
Let $(\Gamma, \mathfrak{s})$ be an alternating-sign Coxeter graph. 
Then the eigenvalues of the bipartite Coxeter transformation $C_{+-}$ are real and strictly negative.
\end{prop}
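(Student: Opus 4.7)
The plan is to compute the bipartite Coxeter transformation $C_{+-}$ explicitly as a block matrix and reduce the statement to the positive definiteness of a symmetric matrix. I would order the vertices with those in $V_+$ first and those in $V_-$ second, and let $N$ denote the $|V_+|\times|V_-|$ block of the adjacency matrix recording the edges between the two parts. Since the sign bipartition coincides with the graph bipartition, there are no edges within $V_+$ or within $V_-$, so the reflections inside each part commute and $C_+$, $C_-$ are well defined.

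The first step is to evaluate $s_i([v_j])$ using the given formula together with $B([v_i],[v_i]) = -2\mathfrak{s}(v_i)$. This collapses very cleanly: for $v_i\in V_+$ one gets $s_i([v_j]) = [v_j] + a_{ij}[v_i]$, and for $v_i\in V_-$ one gets $s_i([v_j]) = [v_j] - a_{ij}[v_i]$. Multiplying the commuting reflections within each part yields
\[
C_+ = \begin{pmatrix} -I & N \\ 0 & I \end{pmatrix}, \qquad C_- = \begin{pmatrix} I & 0 \\ -N^T & -I \end{pmatrix},
\]
and a direct block multiplication gives
\[
C_{+-} \;=\; C_+C_- \;=\; -\begin{pmatrix} I + NN^T & N \\ N^T & I \end{pmatrix}.
\]

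The main step is then to show that $M := -C_{+-}$ is positive definite. The matrix $M$ is manifestly symmetric, hence has real spectrum, and one has the sum-of-squares identity
\[
\langle Mx,x\rangle \;=\; \|x_+\|^2 + \|N^T x_+ + x_-\|^2,
\]
which vanishes only when $x_+=0$ and $x_-=0$. Hence $M$ is positive definite, its eigenvalues are strictly positive, and the eigenvalues of $C_{+-}=-M$ are real and strictly negative.

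I expect the only place where care is required is the book-keeping in the first step: the signs contributed by $\mathfrak{s}$ on the two sides must combine so that the product $C_+C_-$ really has the symmetric shape above. Once that block form is secured, the positive-definiteness argument is immediate. As a sanity check, the graph $A_2$ (one $+$ vertex and one $-$ vertex joined by an edge, so $N=(1)$) yields the characteristic polynomial $\lambda^2+3\lambda+1$, whose roots are $\tfrac{-3\pm\sqrt{5}}{2}$; this matches the square-of-golden-ratio dilatation of the figure-eight monodromy that appears in Theorem~\ref{goldenmean-thm}.
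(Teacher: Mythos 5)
Your proposal is correct: the block forms of $C_+$ and $C_-$, the product $C_{+-} = -\left(\begin{smallmatrix} I + NN^T & N \\ N^T & I \end{smallmatrix}\right)$, and the sum-of-squares identity $\langle Mx,x\rangle = \|x_+\|^2 + \|N^Tx_+ + x_-\|^2$ all check out, and the $A_2$ sanity check is right. The paper begins the same way (same vertex ordering, same block matrices, same observation that $C_{+-}$ is symmetric and hence has real spectrum), but it establishes strict negativity by a different device: it verifies the identities $(C_+ + C_-)^2 = -A(\Gamma)^2$ and $(C_+ + C_-)^2 = 2I + C_{+-} + C_{+-}^{-1}$, concluding that every eigenvalue $\lambda$ of $C_{+-}$ satisfies $2 + \lambda + \lambda^{-1} = -\alpha^2$ for some (real) eigenvalue $\alpha$ of the adjacency matrix, which forces $\lambda < 0$. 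Your route --- exhibiting $-C_{+-}$ as positive definite via an explicit sum of squares (equivalently, the factorization $-C_{+-} = LL^T$ with $L = \left(\begin{smallmatrix} I & N \\ 0 & I\end{smallmatrix}\right)$ invertible) --- is more self-contained and arguably more elementary, since it never invokes $C_{+-}^{-1}$ or the spectrum of $A(\Gamma)$. What the paper's argument buys in exchange is the explicit correspondence $-\alpha^2 = 2 + \lambda + \lambda^{-1}$ between eigenvalues of $A(\Gamma)$ and of $C_{+-}$, which is then reused verbatim in the proofs of Propositions~\ref{interlacing-prop} and~\ref{monoton-prop} to import interlacing and monotonicity from adjacency-matrix spectral theory; with your approach those later results would still need that correspondence derived separately.
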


\begin{proof}
Let $(\Gamma, \mathfrak{s})$ be an alternating-sign Coxeter graph. 
Number the vertices of $\Gamma$ starting with all the positive ones, and then proceeding to the negative ones. 
With this vertex numbering, the adjacency matrix $A = A(\Gamma)$ of $\Gamma$ becomes a $2\times2$-block matrix with zero blocks on the diagonal and blocks $X$ and $X^\top$ in the upper right and lower left, respectively. Using the above formula for the $s_i$, we have that the products $C_+$ and $C_-$ corresponding to the partition are given by
$$C_{+} = \begin{pmatrix}
  -I & X\\
 0 & I
\end{pmatrix} \text{, }
C_{-} = \begin{pmatrix}
 I & 0\\
 -X^\top & -I
\end{pmatrix}.$$
 Multiplication of $C_+$ and $C_-$ shows that the bipartite Coxeter transformation $C_{+-}=C_{+}C_{-}$ is symmetric. Therefore, $C_{+-}$ has only real eigenvalues.
 It is left to show that there are no positive eigenvalues. 
Note that $(C_{+} + C_{-})^2 = -A(\Gamma)^2$. Furthermore, by expanding we obtain 
\begin{align*}
 (C_{+} + C_{-})^2 &= 2I + C_{+-} + C_{+-}^{-1}
\end{align*}
and thus, for any eigenvalue $\lambda \in \R$ of $C_{+-}$, we have $$2+\lambda + \lambda^{-1} = -\alpha^2,$$ where $\alpha$ is some eigenvalue of the adjacency matrix $A(\Gamma)$.
It follows that $2+\lambda + \lambda^{-1}$ is a non-positive real number, since $\alpha$ is a real number. 
In particular, every eigenvalue $\lambda$ of the alternating-sign Coxeter transformation $C_{+-}$ is strictly negative.
\end{proof}

\subsection{Interlacing property}
Let $\Gamma$ and $\Gamma'$ be alternating-sign Coxeter graphs so that $\Gamma$ is a subgraph
of $\Gamma'$.  We say that
$\Gamma'$ is obtained from $\Gamma$ by a {\it vertex extension} if the vertex set of
$\Gamma'$ contains one more element $w$ than the vertex set of $\Gamma$, and the edges of $\Gamma$
are precisely the edges of $\Gamma'$ that do not have $w$ as an endpoint.



\begin{prop}
\label{interlacing-prop}
Let $(\Gamma, \mathfrak{s})$ and $(\Gamma', \mathfrak{s}')$ be two alternating-sign Coxeter graphs. 
If $\Gamma'$ is a vertex extension of $\Gamma$, then the eigenvalues of the
bipartite Coxeter transformations $C_{+-}$ and $C_{+-}'$ are interlaced,
i.e., if 
$\alpha_1 \leq \dots \leq \alpha_s$ are the eigenvalues of $C_{+-}(\Gamma)$, and $\beta_1 \leq \dots \leq\beta_{s+1}$
are the eigenvalues of $C_{+-}(\Gamma')$, then
$$
\beta_1 \leq \alpha_1 \leq  \beta_2 \leq \cdots \leq \alpha_s \leq \beta_{s+1}.
$$
\end{prop}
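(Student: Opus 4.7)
The plan is to reduce the claim to the classical Cauchy interlacing theorem for symmetric matrices. The starting point is the explicit block form obtained in the proof of Proposition~\ref{bicolour-prop}: with the vertex ordering where positive vertices come first,
$$C_{+-} = C_+C_- = \begin{pmatrix} -I - XX^\top & -X \\ -X^\top & -I \end{pmatrix},$$
which is visibly symmetric. The goal is to exhibit $C_{+-}(\Gamma)$ as a principal submatrix of $C_{+-}(\Gamma')$ and then to invoke Cauchy interlacing.

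The argument splits into two cases according to the sign of the new vertex $w$ of $\Gamma'$. Suppose first that $w$ is positive. Since $\Gamma'$ is alternating-sign, $w$ is adjacent only to negative vertices of $\Gamma$, so the new adjacency block $X'$ is obtained from $X$ by appending a $\{0,1\}$-valued row $y^\top$ at the bottom. Ordering the vertices of $\Gamma'$ so that $w$ is listed last among the positive ones and expanding $C_{+-}(\Gamma')$ via the block formula above, one checks by a short block computation that the principal submatrix obtained by deleting the row and column indexed by $w$ equals $C_{+-}(\Gamma)$ on the nose. Cauchy's interlacing theorem for symmetric matrices then yields the claimed inequalities between the $\alpha_i$ and the $\beta_j$.

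If $w$ is negative, the preceding computation fails because extending $X$ by an extra column $x$ changes $XX^\top$ to $XX^\top + xx^\top$, so $C_{+-}(\Gamma)$ no longer sits as a principal submatrix of $C_{+-}(\Gamma')$. To handle this case, I would pass to the reverse product $C_{-+} := C_-C_+$. Since positive reflections commute pairwise and each $s_i$ has order two, $C_+^2 = I$, so conjugation by $C_+$ carries $C_{+-}$ to $C_{-+}$; in particular, $C_{+-}$ and $C_{-+}$ share the same spectrum. A block computation parallel to the positive case shows that $C_{-+}(\Gamma)$ is a principal submatrix of $C_{-+}(\Gamma')$ when the new vertex is negative, and Cauchy interlacing once again concludes the proof. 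The only substantive step is the principal-submatrix identification, which is a short block-matrix verification in each of the two sign cases, and I do not anticipate any further obstacle.
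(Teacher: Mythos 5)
Your proposal is correct, and it takes a genuinely different route from the paper. The paper never exhibits a principal-submatrix relation between the Coxeter transformations themselves: instead it uses the identity $(C_++C_-)^2=-A(\Gamma)^2=2I+C_{+-}+C_{+-}^{-1}$ from the proof of Proposition~\ref{bicolour-prop} to set up the eigenvalue correspondence $-\alpha^2=2+\lambda+\lambda^{-1}$, observes that the spectrum of $A(\Gamma)$ is symmetric about the origin (bipartiteness), produces a monotone transformation of $\R$ carrying the spectrum of $A(\Gamma)$ onto that of $C_{+-}(\Gamma)$, and then transports the standard Cauchy interlacing of adjacency matrices through that monotone map. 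You instead apply Cauchy interlacing directly at the level of the symmetric matrices $C_{+-}$ (respectively their conjugates $C_{-+}=C_-C_+$), and your block computations check out: with $C_{+-}=\left(\begin{smallmatrix}-I-XX^\top&-X\\-X^\top&-I\end{smallmatrix}\right)$, appending a row to $X$ (new positive vertex) does realize $C_{+-}(\Gamma)$ as a principal submatrix of $C_{+-}(\Gamma')$, while appending a column (new negative vertex) does the same for $C_{-+}$, which is cospectral with $C_{+-}$ since $C_+^2=I$. Your argument is more self-contained and sidesteps the somewhat delicate step of defining the monotone transformation consistently (matching multiplicities and the $\lambda\leftrightarrow\lambda^{-1}$ pairing); the paper's detour through the adjacency matrix is computationally lighter and has the advantage that the same eigenvalue correspondence is reused for Proposition~\ref{monoton-prop}, where $\Gamma$ need not be an induced subgraph and the principal-submatrix trick would not apply directly, so Perron--Frobenius on $A$ is the natural tool there.
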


\begin{proof}
Let $(\Gamma, \mathfrak{s})$ be an alternating-sign Coxeter graph with bipartite Coxeter transformation $C_{+-}$. 
From the proof of Proposition~\ref{bicolour-prop}, we recall that the eigenvalues of $C_{+-}$ are in one-to-one correspondence with the eigenvalues of the adjacency matrix $A(\Gamma)$.
More precisely, the correspondence is given by $$-\alpha^2 = 2 + \lambda + \lambda^{-1},$$
where $\lambda$ and $\alpha$ are eigenvalues of $C_{+-}$ and $A(\Gamma)$, respectively. 
Since $\Gamma$ is bipartite, the eigenvalues of $A(\Gamma)$ are symmetric with respect to the origin~\cite{BrHa}. 
Furthermore, since $\text{max}(\vert \lambda \vert, \vert \lambda \vert^{-1})$ is monotonically increasing with respect to $\alpha^2$, 
there exists a monotonic transformation of $\R$ taking the eigenvalues of $A(\Gamma)$ to the eigenvalues of $C_{+-}$.
Now let $(\Gamma', \mathfrak{s}')$ be an alternating-sign Coxeter graph with bipartite Coxeter transformation $C_{+-}'$ such that $\Gamma'$ is a vertex-extension of $\Gamma$.
Then the eigenvalues of $A(\Gamma)$ and $A(\Gamma')$ are interlaced~\cite{BrHa} and therefore so are the eigenvalues of $C_{+-}$ and $C_{+-}'$.
\end{proof}

\begin{prop} \label{minimum-prop} The minimum spectral radius for an
alternating-sign Coxeter transformation  is realized by the alternating-sign $A_2$ Coxeter graph, and the
spectral radius is the square of the golden mean.
\end{prop}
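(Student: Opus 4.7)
The plan is to show that any alternating-sign Coxeter graph can be reduced to the $A_2$ graph by iteratively removing vertices, and that spectral radius is monotone under this reduction, and finally to compute the spectral radius of $C_{+-}$ for $A_2$ directly.

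First, I would observe that if $\Gamma$ is a connected graph with $n \geq 3$ vertices, then $\Gamma$ admits a non-cut vertex $w$ (for instance, a leaf of any spanning tree). The graph $\Gamma_0 = \Gamma \setminus \{w\}$ is then connected, still bipartite with respect to the restriction of $\mathfrak{s}$, and $\Gamma$ is a vertex extension of $\Gamma_0$ in the sense of the excerpt. Iterating, any alternating-sign Coxeter graph with at least two vertices can be reduced through a chain of vertex extensions to the alternating-sign $A_2$ graph (two vertices of opposite sign joined by an edge).

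Next, I would use the interlacing property (Proposition~\ref{interlacing-prop}) together with Proposition~\ref{bicolour-prop}. Because all eigenvalues of $C_{+-}$ and $C_{+-}'$ are strictly negative, the inequality $\beta_1 \leq \alpha_1$ in the interlacing chain gives $|\beta_1| \geq |\alpha_1|$. Since the spectral radius of a symmetric operator with only negative eigenvalues equals the absolute value of the most negative eigenvalue, this shows that the spectral radius grows monotonically along vertex extensions. Combined with the reduction step, the minimum spectral radius over all alternating-sign Coxeter transformations is attained at the $A_2$ graph.

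Finally, I would carry out the elementary calculation for $A_2$. Using the block formulas for $C_+$ and $C_-$ from the proof of Proposition~\ref{bicolour-prop} with $X = (1)$, one obtains
\[
C_{+-} = \begin{pmatrix} -2 & -1 \\ -1 & -1 \end{pmatrix},
\]
whose characteristic polynomial is $\lambda^2 + 3\lambda + 1$, with roots $\frac{-3 \pm \sqrt{5}}{2}$. The spectral radius is $\frac{3+\sqrt{5}}{2}$, which equals $\varphi^2$ for $\varphi$ the golden ratio.

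No serious obstacle is anticipated here: the interlacing proposition does the real work, and the only mildly non-obvious ingredient is the existence of a non-cut vertex allowing the inductive reduction to $A_2$. The sign condition is automatically preserved, since removing a vertex of a bipartition leaves the remaining vertices bipartitioned by the restricted $\mathfrak{s}$.
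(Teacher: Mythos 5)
Your proposal is correct and follows essentially the same route as the paper: the paper's proof is the one-line observation that every non-trivial alternating-sign Coxeter graph is an iterated vertex extension of the alternating-sign $A_2$ graph, combined with Proposition~\ref{interlacing-prop}. You merely fill in details the paper leaves implicit — the non-cut-vertex argument guaranteeing connectedness at each reduction step, the passage from interlacing of negative eigenvalues to monotonicity of the spectral radius, and the explicit computation of $C_{+-}$ for $A_2$ — all of which check out.
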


\begin{proof}
Noting that every non-trivial alternating-sign Coxeter graph
is a (perhaps multiple) vertex extension of the alternating-sign $A_2$ graph, the statement follows from Proposition~\ref{interlacing-prop}.
\end{proof}

\begin{rem}{\em
If a bipartite graph $\Gamma$ is a subgraph of another bipartite graph $\Gamma'$ with one more vertex but $\Gamma'$ is not a vertex-extension of $\Gamma$,
then the eigenvalues of the corresponding adjacency matrices need not be interlaced. 
\begin{figure}[h]
  \def\svgwidth{300pt}
  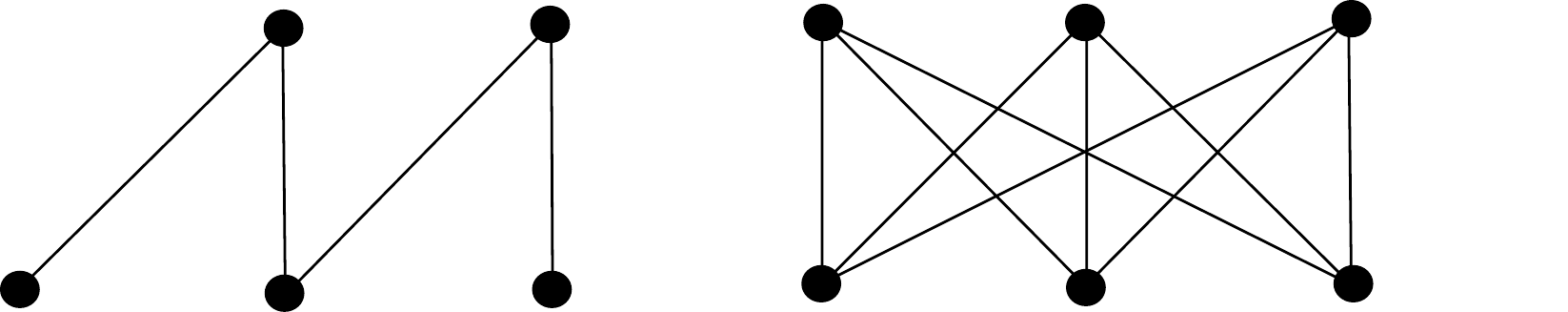
  \caption{}
  \label{non-interlacing}
\end{figure}
Choosing $\Gamma$ and $\Gamma'$ as in Figure~\ref{non-interlacing},
the eigenvalues of the adjacency matrix of $\Gamma$ are given by $\{-\sqrt{3}, -1, 0, 1, \sqrt{3}\}$ and the eigenvalues of the adjacency matrix of $\Gamma'$ are given by $\{-3, 0, 0, 0, 0, 3\}$.
In particular, these eigenvalues are not interlaced. However, focusing on the largest eigenvalue, it is still true that the spectral radius is monotonic under graph inclusion.
}
\end{rem}

\begin{prop}
\label{monoton-prop}
Let $(\Gamma, \mathfrak{s})$ and $(\Gamma', \mathfrak{s}')$ be two alternating-sign Coxeter graphs. 
If $\Gamma$ is a subgraph of $\Gamma'$, then the spectral radius of $C_{+-}$ is less than or equal to the spectral radius 
of $C_{+-}'$.
\end{prop}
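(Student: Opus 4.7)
The plan is to reduce the statement to a monotonicity claim about the adjacency matrix and then apply the Perron--Frobenius theorem. From the proofs of Propositions~\ref{bicolour-prop} and~\ref{interlacing-prop}, every eigenvalue $\lambda$ of $C_{+-}$ is real and strictly negative, and it satisfies
$$-\alpha^2 = 2 + \lambda + \lambda^{-1}$$
for some eigenvalue $\alpha$ of the adjacency matrix $A(\Gamma)$. Conversely, each $\alpha$ arises from two negative roots of the quadratic $\lambda^2 + (2+\alpha^2)\lambda + 1 = 0$, and the more negative of these,
$$\lambda_- = \tfrac{1}{2}\left(-(2+\alpha^2) - \sqrt{(2+\alpha^2)^2 - 4}\right),$$
is a strictly decreasing function of $\alpha^2$. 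Hence the spectral radius of $C_{+-}$ equals $|\lambda_-|$ evaluated at $\alpha^2 = \rho(A(\Gamma))^2$, and it is a strictly increasing function of $\rho(A(\Gamma))$. It therefore suffices to prove $\rho(A(\Gamma)) \leq \rho(A(\Gamma'))$.

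For this step I would argue directly from Perron--Frobenius theory. Set $n = |V(\Gamma')|$ and index rows and columns of $n\times n$ matrices by $V(\Gamma')$. Define the symmetric non-negative matrix $\tilde A$ by $(\tilde A)_{v,w} = 1$ whenever $\{v,w\}$ is an edge of $\Gamma$ and $(\tilde A)_{v,w} = 0$ otherwise. Because $\Gamma$ is a subgraph of $\Gamma'$, we have $0 \leq \tilde A \leq A(\Gamma')$ entrywise, and the standard monotonicity theorem for spectral radii of entrywise non-negative matrices yields $\rho(\tilde A) \leq \rho(A(\Gamma'))$. On the other hand, $\tilde A$ is the adjacency matrix $A(\Gamma)$ bordered by zero rows and columns for the vertices in $V(\Gamma') \setminus V(\Gamma)$, so $\rho(\tilde A) = \rho(A(\Gamma))$, and the desired inequality follows.

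The main point to be careful about is not a deep obstacle but a bookkeeping one: one must verify that $\rho(C_{+-})$ is really controlled by the largest $|\alpha|$ among eigenvalues of $A(\Gamma)$, and not, say, by eigenvalues $\lambda$ close to zero whose reciprocals happen to be very negative. This is exactly what the monotonicity of $\lambda_-$ in $\alpha^2$ records. A less efficient alternative would be to factor the inclusion $\Gamma \subseteq \Gamma'$ into a sequence of vertex extensions (handled by Proposition~\ref{interlacing-prop}) followed by edge additions within a fixed vertex set, but the remark preceding the proposition shows that vertex extensions alone do not cover every case, so a separate monotonicity input is needed in any event, and the Perron--Frobenius reduction supplies it uniformly.
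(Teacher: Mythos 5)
Your proposal is correct and follows essentially the same route as the paper: the paper's (very brief) proof likewise reduces to the adjacency spectrum via the relation $-\alpha^2 = 2 + \lambda + \lambda^{-1}$ and then invokes Perron--Frobenius monotonicity using that $A(\Gamma)$ is dominated entrywise by (a bordered submatrix of) $A(\Gamma')$. You have simply written out the details that the paper leaves implicit, including the check that the spectral radius of $C_{+-}$ is governed by the largest $\alpha^2$.
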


\begin{proof}
The proof is basically the same as the proof of Proposition~\ref{interlacing-prop}. However, instead of interlacing (which does not necessarily apply in the case of non-induced subgraphs), 
we use Perron-Frobenius theory and the fact that $A(\Gamma)$ is dominated by a submatrix of $A(\Gamma')$.
\end{proof}

\begin{rem}{\em
General Coxeter graphs are defined with arbitrary edge weights $m_{ij} \ge 3$. The corresponding entries $a_{ij}$ of the adjacency matrix are then defined to be $a_{ij} = 2\cdot\text{cos}(2\pi/m_{ij})$.
Although we formulated Propositions~\ref{bicolour-prop} and~\ref{monoton-prop} for constant edge-weights $m_{ij} = 3$,  they also hold in this generalized context. 
Proposition~\ref{bicolour-prop} holds without change of wording.
For Proposition~\ref{monoton-prop}, we must add the assumption that when $\Gamma$ is a subgraph of $\Gamma'$, then every edge-weight of $\Gamma$ is less than or equal to the edge-weight of $\Gamma'$.  
}
\end{rem}

\section{Geometric realization}\label{geometry-sec}

In this section, we associate fibered alternating links and more general mapping tori to alternating-sign Coxeter graphs $(\Gamma,\mathfrak{s})$.

\subsection{Mapping classes from mixed-sign Coxeter systems}
Mixed-sign Coxeter systems, defined by Coxeter graphs with ordered, signed vertices, are useful
for building examples of mapping classes.

 As in the {\it classical} (or {\it positive-sign}) case, a mixed-sign Coxeter graph with $n$ vertices defines a
 subgroup of the general linear group
 $\mbox{GL}(n,\mathbb R)$ generated by reflections.  In the classical case, the reflections
  preserve an associated symmetric bilinear form
$2I - A$, where $A$ is the adjacency matrix of the Coxeter graph.
For a mixed-sign Coxeter system the bilinear form is given  by
$2I_{\mathfrak s} - A$, where $I_{\mathfrak s}$ is a diagonal matrix with $\pm 1$ entries on the diagonal
depending on the signs $\mathfrak s$ assigned to vertices of the Coxeter graph.
For mixed-sign Coxeter graphs, just as for classical ones, one can explicitly construct mapping classes whose homological 
monodromy is  conjugate to the Coxeter transformation up to sign \cite{Hi1, Hi2, Le, Thu}.  

Classical bipartite Coxeter systems have been shown to have many useful properties. 
A'Campo showed that all 
eigenvalues of the Coxeter transformation are real or lie on the unit circle.  This condition is sometimes
called {\it bi-stability}  \cite{HM}.  Since the traces of the eigenvalues over the reals are related to the
eigenvalues of the adjacency matrix of the Coxeter graph, the eigenvalues satisfy an interlacing theorem.
McMullen used this to prove monotonicity of the spectral radius of Coxeter transformations with respect
to graph inclusion, and found a sharp lower bound for the gap between 1 and the next smallest
spectral radius of Coxeter transformations  \cite{McM}.
It follows, in particular, that the classical Coxeter mapping classes associated to bipartite classical Coxeter
graphs that are not spherical or affine have dilatation bounded from below by Lehmer's number, which is approximately 1.17628 \cite{Le}.

\begin{rem}\label{positivity-rem} {\em  By contrast to Theorem 3, A'Campo showed that for
any classical bipartite Coxeter graph that is not spherical or affine,
the roots of the corresponding Coxeter polynomials are either on the
unit circle or positive real, with at least one root greater than 1~\cite{AC1}. If Hoste's
conjecture is true, this gives a homological proof of the fact 
that the knots associated to classical bipartite Coxeter graphs 
that are not spherical or affine can never be alternating. 
This can also be proved independently: such a knot is positive, i.e.\ it has a diagram with only positive crossings.
for the signature $\vert\sigma\vert$ and genus $g$, we have $\vert\sigma\vert < 2g$, since 
$2g$ equals the number of vertices and $\vert\sigma\vert$ equals the signature 
of the bilinear form $2I - A$. But for knots which are both positive and alternating, 
$\vert\sigma\vert = 2g$ holds, e.g.\ by properties of Rasmussen's $s$-invariant~\cite{Ra}.
}
\end{rem}

Let $\mathcal L$ be
an arrangement of line segments in the plane whose intersection graph equals $\Gamma$.   That is, to each vertex $v$ of $\Gamma$
there is an associated line segment  $\ell_v$ in $\mathcal L$, and two line segments in $\mathcal L$ intersect if the corresponding vertices are connected by
an edge of $\Gamma$.
A {\it planar realization of} $\Gamma$ is an embedding of $\mathcal  L$ in $\R^2$ 
with coordinate
axes $x$ and $y$, so that  if  $\mathfrak{s}(v) = 1$, then $\ell_v$
is parallel to the $y$-axis, and if $\mathfrak{s}(v) = -1$, then $\ell_v$ is parallel to the $x$-axis.

If $\Gamma$ has a planar realization, then we thicken the $\ell_v$ into rectangular strips $\ell_v \times [-1,1]$ (resp., $[-1,1] \times \ell_v$),
so that each segment $\ell_v$ is identified with $\ell_v \times \{0\}$ (resp., $\{0\} \times \ell_v$). 
If $v$ and $w$ are adjacent on $\Gamma$ then the rectangular strips
 $\ell_v$ and $\ell_w$ are glued together at right angles as in Figure~\ref{crossing-fig}. 
   \begin{figure}[htbp] 
    \centering
    \includegraphics[width=2in]{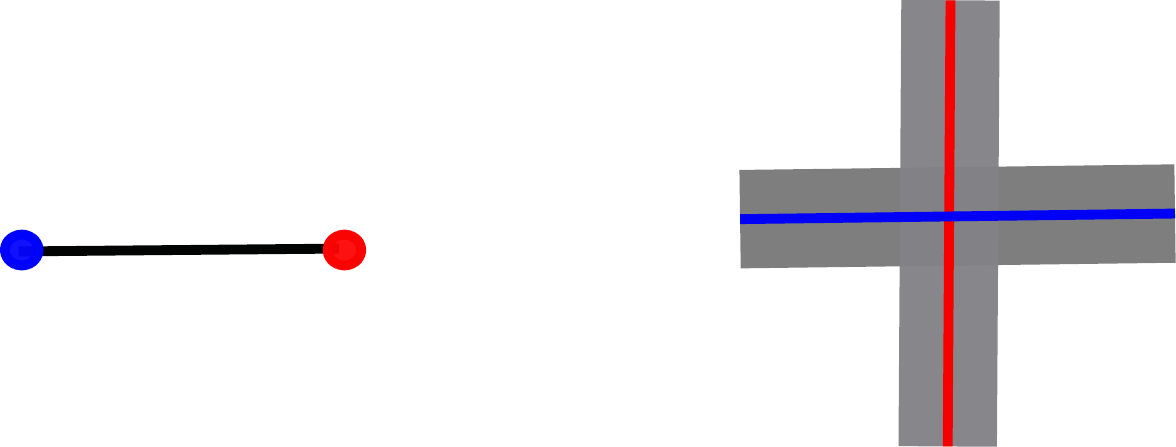} 
    \caption{}
    \label{crossing-fig}
 \end{figure}
 The thickenings and gluings can be made so that 
 all rectangular strips in each bipartite partition are parallel to one another.
 \begin{figure}[htbp] 
   \centering
   \includegraphics[width=4in]{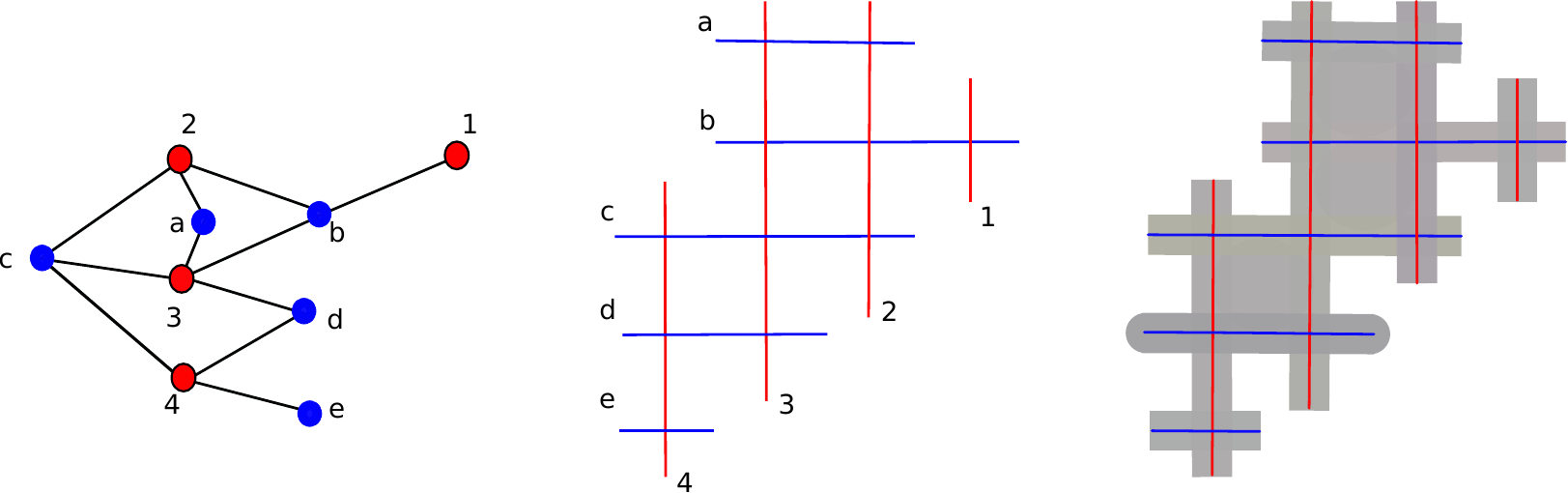} 
   \caption{}
   \label{lines1-fig}
\end{figure}
A planar realization is  {\it fillable} if it is possible to attach (possibly non-convex) polygons to
the planar graph along closed cycles, so that the interior of the polygon does not include any
endpoint of a line segment.  
Figure~\ref{lines1-fig} gives an example of a fillable planar realization, and Figure~\ref{lines2-fig} gives an example of a 
non-fillable planar realization.  

Think of the planar realization as being embedded in $S^3$.
Let $S$ be the  filled planar realization after gluing together each end of the horizontal
strips to its opposite with a single positive full twist, and the end of each vertical strip to its opposite by a single negative full twist.   
The boundary of $S$ is a link $K \subset S^3$ with distinguished Seifert surface $S$.
We call $(K,S)$ a {\it Coxeter link} associated to $\Gamma$.

\begin{figure}[htbp] 
   \centering
      \includegraphics[width=3.5in]{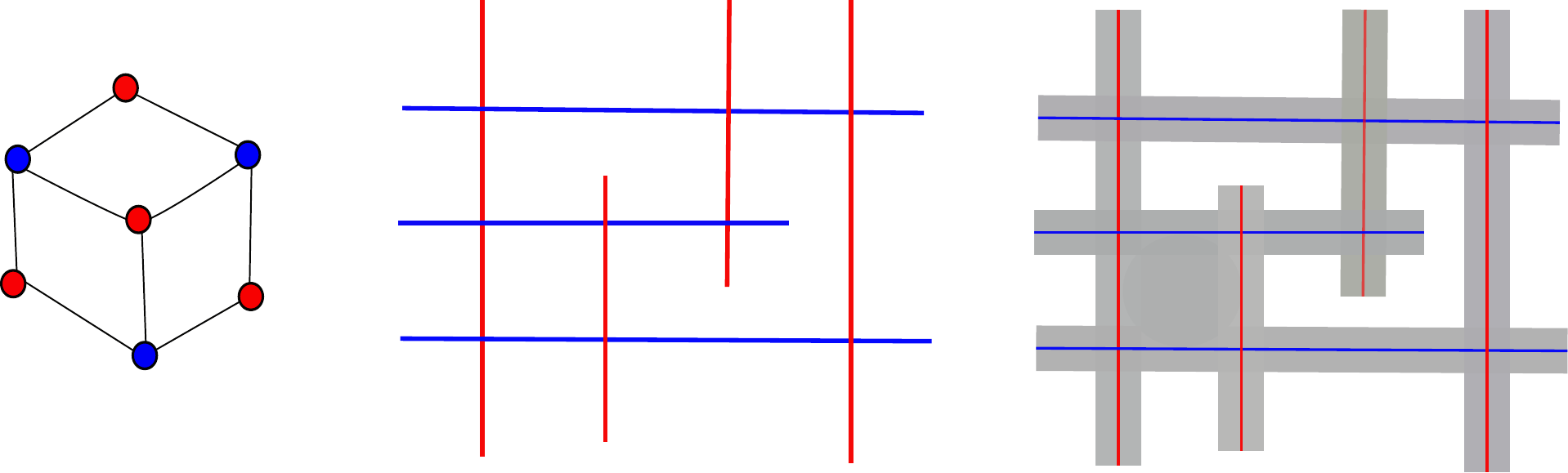} 
   \caption{}
   \label{lines2-fig}
\end{figure}

\begin{figure}[htbp] 
   \centering
   \includegraphics[width=2.5in]{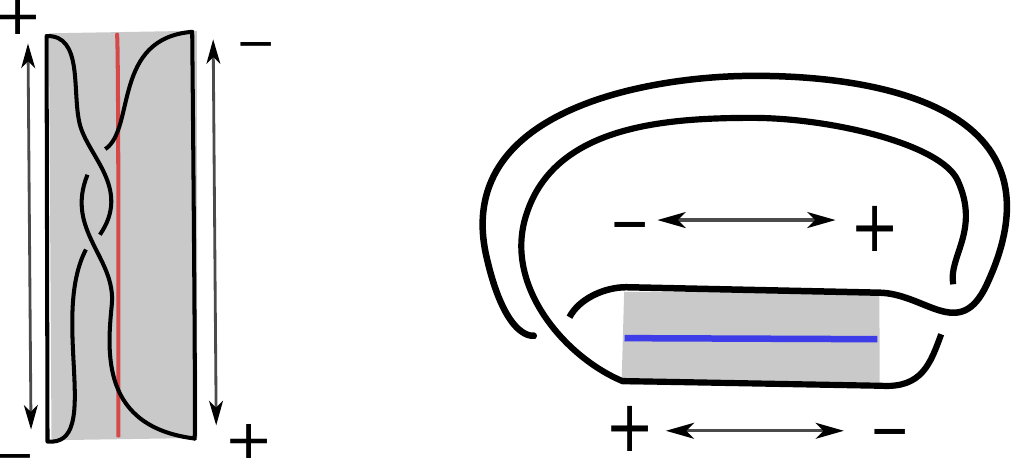} 
   \caption{}
   \label{leftright-fig}
\end{figure}

\begin{prop} If $\Gamma$ is an alternating-sign Coxeter graph with a fillable planar realization, then 
any associated Coxeter link is alternating.
\end{prop}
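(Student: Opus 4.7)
The plan is to exhibit an explicit alternating diagram for $K = \partial S$, built by projecting orthogonally onto the plane of the planar realization. The crossings of this diagram $D$ come in two types: \emph{interior} crossings at each intersection of two line segments of $\mathcal L$ (one per edge of $\Gamma$), and \emph{twist} crossings, a pair at each short end of every rectangular strip, produced by the full twists in the construction. My goal is to fix a global over/under rule and then walk along each component of $K$ to verify that over- and under-crossings alternate.

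First I would nail down the over/under convention at the interior crossings. By the construction in Figure~\ref{crossing-fig}, at the plumbing of a vertical and a horizontal strip one strip (say the vertical one) lies above the other, and this rule is globally consistent precisely because the signs on $\Gamma$ are alternating: every interior crossing is between a positive-sign (vertical) strip and a negative-sign (horizontal) strip, never between two vertical strips or two horizontal strips. Away from the twist regions, an arc of $K$ running along a long edge of a horizontal strip therefore meets only under-crossings, while an arc running along a long edge of a vertical strip meets only over-crossings.

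Next I would analyze a twist region at a short end of a single strip. A full twist contributes exactly two crossings to $D$, and the geometry of the twist forces the boundary arc of $K$ to pass from one long edge of the strip to the other as it traverses the twist. A local inspection shows that the pair of twist crossings is itself alternating, with a definite over-under type determined by the sign of the twist. The essential point is that, for the prescribed positive full twists on horizontal strips and negative full twists on vertical strips, this pair is arranged so that an incoming stretch of $K$ consisting entirely of under-crossings along one long edge continues through the twist onto the opposite long edge by picking up first an over-crossing and then an under-crossing, thereby continuing the alternating pattern into the next stretch.

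Combining the two steps gives alternation along every component of $K$. The fillability hypothesis ensures that the polygon attachments close up the surface without introducing any further crossings in the projection, so the local alternation verified along each long-edge stretch and each twist region glues together to yield a globally alternating diagram. The main obstacle is the bookkeeping at the twists: one must verify that the prescribed twist signs (opposite to the sign assigned to the strip) are exactly the choices compatible with the interior convention, since the wrong twist sign would place two like-type crossings in sequence and destroy alternation. Equivalently, had two adjacent vertices carried the same sign, one would meet two consecutive over- or under-crossings along a common long edge; this is why the alternating-sign hypothesis is essential.
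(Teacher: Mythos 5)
Your proposal has a genuine gap, and it starts with a misreading of the construction. At an intersection point of two segments of $\mathcal L$ the two rectangular strips are not stacked one over the other: they are \emph{plumbed}, i.e.\ glued along a common square (Figure~\ref{crossing-fig}), so the surface is locally an embedded planar piece there and the projection of $K=\partial S$ has \emph{no} crossing at such a point. There are no ``interior crossings'' of the kind your first step sets up; all crossings of the diagram arise from the twisted bands that close up each strip into a Hopf band (which is why the paper's proof only needs to analyse the local picture of Figure~\ref{leftright-fig}). So the global ``vertical over horizontal'' convention you fix governs crossings that do not exist, and the alternating-sign hypothesis is not being used where you think it is.

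Even granting your picture of the diagram, the argument is internally inconsistent. You assert that an arc of $K$ running along a long edge of a horizontal strip meets \emph{only} under-crossings. The moment a segment $\ell_v$ meets two or more transverse segments --- i.e.\ the moment some vertex of $\Gamma$ has degree at least two, which happens for every connected tree with at least three vertices --- that arc carries two consecutive crossings of the same type, and the diagram fails to be alternating precisely along that stretch. Your twist-region analysis only governs the transition at the two ends of a strip and cannot repair consecutive like-type crossings in its interior, so as written the argument establishes alternation only for the graphs $A_1$ and $A_2$. The paper's proof avoids this by a different local rule: proceeding along any segment $\ell_v$, every crossing lying to the \emph{left} is an overpass and every crossing lying to the \emph{right} is an underpass, consistently for horizontal and vertical segments alike; alternation then follows because successive crossings encountered by a strand of $\partial S$ are approached alternately from the left and from the right of the segments it travels along. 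To fix your proof you would need to identify the actual crossings (the clasp crossings of each Hopf band and the passages of the return bands), and verify a positional rule of this left/right type rather than a per-strip over/under rule.
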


\begin{proof}
The link $K$ has an alternating planar diagram coming from drawing each vertical and horizontal Hopf band as in
Figure~\ref{leftright-fig}.  Here the shaded  rectangle is the original neighborhood of the line segment
associated to a vertex of $\Gamma$.  The signs indicate over~($+$) and under~($-$) crossings.  Thus we can see that
for each vertex $v \in V_\Gamma$, when proceeding along $\ell_v$ there is always a $-$ sign on the right
and a $+$ sign on the left, where $-$ indicates an upcoming underpass, and $+$ indicates an upcoming
overpass.  Since the signs are consistent on vertical and horizontal segments ($-$ appears on the right and
$+$ appears on the left no matter from which direction you approach an endpoint of a segment) the link $K$ is alternating.
\end{proof}

\begin{prop} \label{homdil-prop} The Coxeter link of an alternating-sign Coxeter graph is fibered, and the homological monodromy is
conjugate to $-C_{+-}$.  
\end{prop}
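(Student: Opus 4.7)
The plan is to exhibit $S$ as an iterated Murasugi sum (plumbing) of Hopf bands, one Hopf band $H_v$ per vertex $v\in V_\Gamma$, positively or negatively twisted according to $\mathfrak s(v)$, with $H_v$ plumbed to $H_w$ along an arc whenever $v$ and $w$ are adjacent in $\Gamma$. (For non-tree $\Gamma$, the polygons that must be filled to get a fillable planar realization are exactly what is needed to close up the construction as a Seifert surface in $S^3$.) Each individual $H_v$ is a fiber surface in $S^3$ whose monodromy is a single Dehn twist $T_{c_v}^{\epsilon(v)}$ about its core $c_v$, with $\epsilon(v)\in\{\pm1\}$ dictated by $\mathfrak s(v)$. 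Invoking the standard theorem that the plumbing of fiber surfaces is a fiber surface whose monodromy is the composition of the summand monodromies extended by the identity then gives that $(K,S)$ is fibered with monodromy
$$\phi \;=\; \prod_{v\in V_\Gamma}T_{c_v}^{\epsilon(v)}.$$

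\textbf{Homology basis and intersection form.} By standard plumbing theory, the core classes $[c_v]$ form a basis of $H_1(S;\R)\cong\R^{V_\Gamma}$ (a dimension count via Euler characteristic shows there can be no further generators once the polygons fill the cycles of $\Gamma$). Two cores $c_v$ and $c_w$ meet transversely in a single point exactly when $v,w$ are adjacent in $\Gamma$, and consistently orienting the horizontal and vertical strips shows that the sign of $\langle c_v,c_w\rangle_S$ is determined only by the bipartite classes of $v$ and $w$. Ordering the basis $V_+$ first, the intersection form on $H_1(S;\R)$ therefore takes the block shape $\begin{pmatrix}0 & X\\ -X^\top & 0\end{pmatrix}$, with the same matrix $X$ that appears in the proof of Proposition~\ref{bicolour-prop}.

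\textbf{Matching the two matrices.} Dehn twists along disjoint curves commute, so one can regroup $\phi=\phi_+\phi_-$ with $\phi_\pm=\prod_{v\in V_\pm}T_{c_v}^{\epsilon(v)}$ each well-defined. Using the standard formula $T_c^\epsilon([d])=[d]+\epsilon\langle c,d\rangle_S[c]$ together with the block form of the intersection pairing, the homological matrices of $\phi_+$ and $\phi_-$ come out block-triangular with identity diagonal blocks and off-diagonal blocks $\pm X$, $\pm X^\top$. Multiplying and then conjugating by $\mathrm{diag}(I,-I)$, a basis change flipping the orientations of the cores on one side of the bipartition, produces the matrix $-C_{+-}=\begin{pmatrix}I+XX^\top & X\\ X^\top & I\end{pmatrix}$ of Proposition~\ref{bicolour-prop}. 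The main obstacle is sign bookkeeping: the orientation conventions on the strips, the twist signs $\epsilon(v)$, and the intersection pairing signs must all be checked to cooperate so that the final product is conjugate to $-C_{+-}$ rather than to some other sign combination. The global $-$ has a conceptual origin worth emphasizing: a Dehn twist $T_c$ fixes $[c]$, whereas a Coxeter reflection $s_c$ sends $[c]\mapsto-[c]$, and when these rank-one differences are collected across each bipartite side they condense into a single overall sign in the product.
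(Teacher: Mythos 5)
Your proposal is correct and follows essentially the same route as the paper: fiberedness via iterated Hopf plumbing (Gabai--Stallings), the core curves $\gamma_v$ as a homology basis, and identification of $\phi_*$ with $-C_{+-}$ in block form. The only cosmetic difference is in the last step, where the paper invokes the Seifert-matrix formula $\phi_*=(M^\top)^{-1}M$ together with Howlett's identity $C_{+-}=-M(M^\top)^{-1}$ for $M=-C_+$, whereas you multiply out the transvection matrices directly and absorb the residual signs by conjugating with $\mathrm{diag}(I,-I)$ --- both yield the same conjugacy.
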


\begin{proof}
Since the surface $S$ can be obtained
from a disk by Hopf plumbings, the boundary of $S$ is a  fibered  link $K$ with fiber $S$.  All the
strips become annuli on $S$. The monodromy of the fibration is the product of right or left Dehn twists around core
curves of the annuli, right or left being determined by whether the twist is positive or negative
\cite{Gab, Mur, Sta}. 

Let $V_\Gamma$ be the set of vertices of $\Gamma$.  For $v \in V_\Gamma$, let
$\gamma_v$ be the closed curve defined by $\ell_v$.  Then the homology classes $[\gamma_v]$ form a basis for
$H_1(S;\mathbb R)$,  and the
 monodromy $\phi$ of
$S$ is the product of positive Dehn twists on  $\gamma_v$ 
for $v$ such that $\mathfrak s (v) = 1$  composed
with the product of negative Dehn twists on $\gamma_v$ for $v$ such that $\mathfrak s(v) = -1$.  
Let $\R^{V_\Gamma}$ be the vector space of $\R$-labelings 
of the vertices. For $v \in V_\Gamma$, let $[v]$ be the corresponding element of $\R^{V_\Gamma}$ giving the label
$1$ on $v$ and $0$ on all other vertices of $\Gamma$.  There
is a commutative diagram
$$
\xymatrix{
&\R^{V_\Gamma}\ar[d]_{-C_{+-}} \ar[r] &H_1(S;\R) \ar[d]^{\phi_*}\\
&\R^{V_\Gamma} \ar[r] &H_1(S;\R)
}
$$
where the horizontal arrows taking $[v]$ to $[\gamma_v]$ are isomorphisms.

The Coxeter transformation decomposes as
$$
C_{+-}  = C_+ C_- = - M (M^T)^{-1},
$$
where $M = -C_+$, cf.~\cite{Ho}.
By construction, $M$ is also the Seifert matrix for $S$  in $S^3 \setminus K$ with respect
to the generators for homology given by the core curves of the attached Hopf bands.  
Thus 
$$
\phi_* = (M^T)^{-1}M,
$$
see e.g.~\cite{Rol}, and is conjugate to $-C_{+-}$.
\end{proof}

\begin{cor} 
\label{Alexcor}
The Alexander polynomial $\Delta(t)$ satisfies
$$
\Delta(t) = c(-t),
$$
where $c(t)$ is the characteristic polynomial of the Coxeter transformation $C_{+-}$ of $\Gamma$.
\end{cor}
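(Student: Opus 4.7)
The plan is to read the corollary as a direct consequence of Proposition~\ref{homdil-prop} combined with the interpretation of the Alexander polynomial of a fibered link as the characteristic polynomial of its homological monodromy, which was recalled in the introduction.

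First, I would invoke the identity $\Delta_K(t) = \Delta_S(t) = \Delta_{S,\phi}(t)$ stated in the introduction for fibered links, which by definition gives
\[
\Delta(t) = \det(tI - \phi_*),
\]
where $\phi_*$ is the homological monodromy of the fibration of $S^3 \setminus K$ by the Seifert surface $S$. Next I would apply Proposition~\ref{homdil-prop}, which asserts that $\phi_*$ is conjugate to $-C_{+-}$. Conjugation does not change the characteristic polynomial, so
\[
\Delta(t) = \det(tI - (-C_{+-})) = \det(tI + C_{+-}).
\]

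On the other hand, by definition $c(t) = \det(tI - C_{+-})$, and if $n = |V_\Gamma|$ denotes the dimension then
\[
c(-t) = \det(-tI - C_{+-}) = (-1)^n \det(tI + C_{+-}) = (-1)^n \Delta(t).
\]
Since the Alexander polynomial of a link is only well-defined up to multiplication by a unit $\pm t^k$, this establishes the stated equality $\Delta(t) = c(-t)$ under the usual convention. There is essentially no obstacle beyond keeping track of the sign convention for the characteristic polynomial; all the substantive work has already been done in Proposition~\ref{homdil-prop}, where the identification of $\phi_*$ with $-C_{+-}$ via the Seifert form $M = -C_+$ and the relation $C_{+-} = -M(M^\top)^{-1}$ was carried out.
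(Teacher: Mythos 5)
Your proof is correct and takes essentially the same route as the paper's one-line argument: the Alexander polynomial is the characteristic polynomial of $\phi_*$, which by Proposition~\ref{homdil-prop} is conjugate to $-C_{+-}$. Your extra care with the factor $(-1)^n$ is actually warranted — in the paper's own example ($n=5$) one has $c(-t) = -\Delta(t)$, so the stated identity indeed only holds up to the unit ambiguity you invoke.
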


\begin{proof} 
The Alexander polynomial $\Delta_{S}(t)$ is the characteristic polynomial of $M(M^T)^{-1} = - C_{+-}$.
\end{proof}

\begin{ex}{\em
Figure~\ref{Coxeterlink-fig} gives an example of an alternating-sign Coxeter graph and 
fillable planar realization.

\begin{figure}[htbp] 
   \centering
   \includegraphics[width=4in]{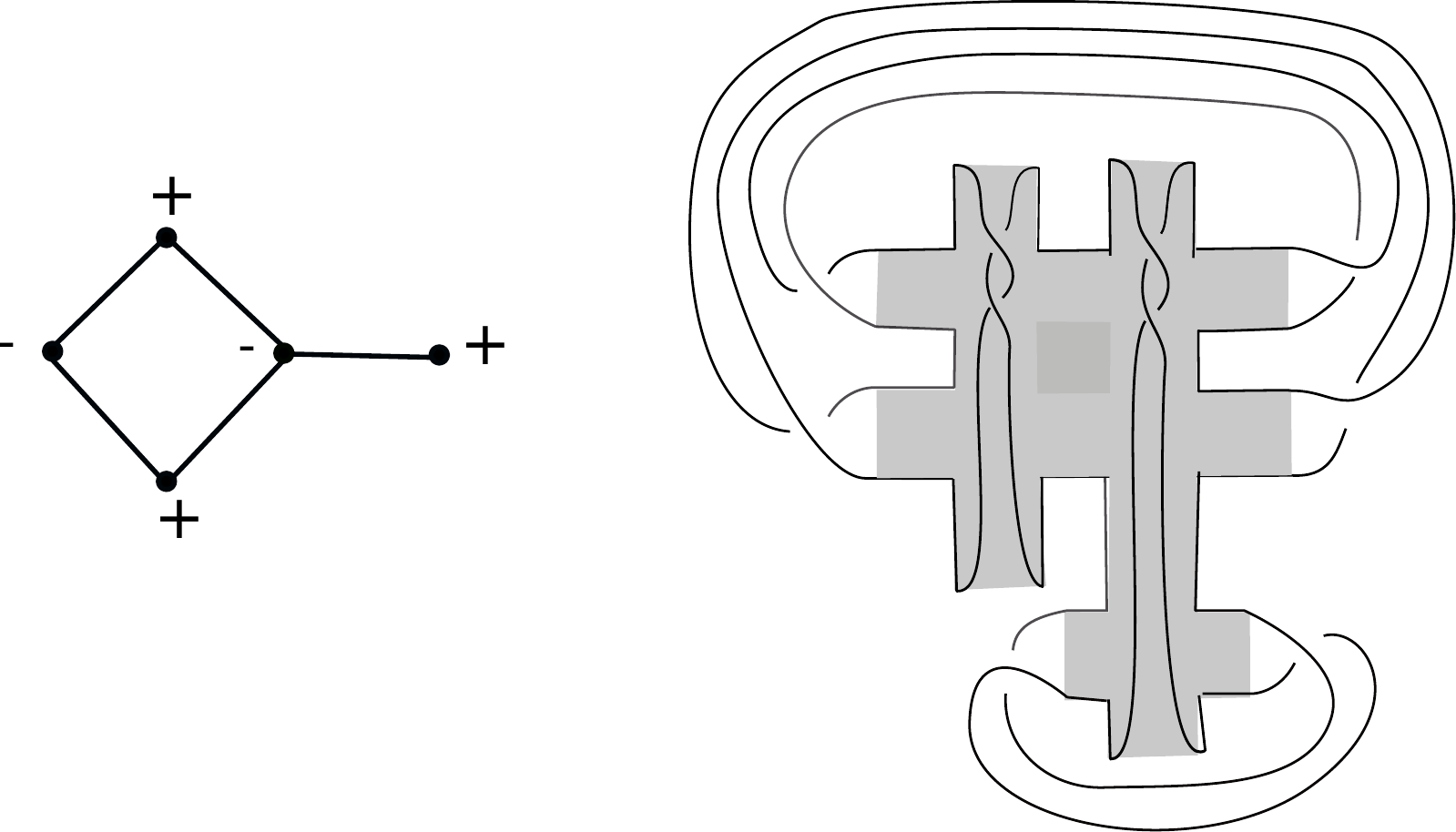} 
   \caption{}
   \label{Coxeterlink-fig}
\end{figure}

Then
$$
C_+ = 
\left  [
\begin{array}{rrrrr}
-1 & 0 & 0 & 1 & 1\\
0 & -1 & 0 & 1 & 1\\
0 & 0 & -1 & 0 & 1\\
0  & 0 & 0 & 1 & 0\\
0 & 0 & 0 & 0 & 1
\end{array}
\right ]
\qquad
C_- = 
\left  [
\begin{array}{rrrrr}
1 & 0 & 0 & 0 & 0\\
0 & 1 & 0 & 0 & 0\\
0 & 0 & 1 & 0 & 0\\
-1  & -1 & 0 & -1 & 0\\
-1 & -1 & -1 & 0 & -1
\end{array}
\right ].
$$
Setting the orientation on  the Seifert surface $S$ so that the shaded area is oriented positively toward the viewer,
we see that $-C_+$ is the Seifert matrix, and
$$
C_- =  - (C_+^T)^{-1}.
$$
The Coxeter transformation is given by
$$
C_{+-} = C_+C_- =
-\left  [
\begin{array}{rrrrr}
3 & 2 & 1 & 1 & 1\\
2 & 3 & 1 & 1 & 1\\
1 & 1 & 2 & 0 & 1\\
1  & 1 & 0 & 1 & 0\\
1 & 1 & 1 & 0 & 1
\end{array}
\right ].
$$
The associated Alexander and Coxeter polynomials are:
\begin{eqnarray*}
\Delta(t) &=& t^5 - 10t^4 + 27t^3 - 27 t^2 + 10 t -1\\
c(t) &=& t^5 + 10t^4 + 27 t^3 + 27 t^2 + 10t + 1.
\end{eqnarray*}
}
\end{ex}

\begin{rem}{\em The link associated to a Coxeter graph is not uniquely determined by the combinatorics of the graph. 
Figure~\ref{tree2-fig} shows two different planar embeddings of a Coxeter graph. The two links realizing these embeddings are distinct:
one of them has an unknotted component, while the other does not. While for a large class of classical Coxeter trees, two different planar embeddings
always yield distinct but mutant links by a theorem of Gerber \cite{Ge}, we do not know whether the same holds in the alternating-sign case.}
\end{rem}

\begin{figure}[htbp] 
   \centering
   \includegraphics[width=4in]{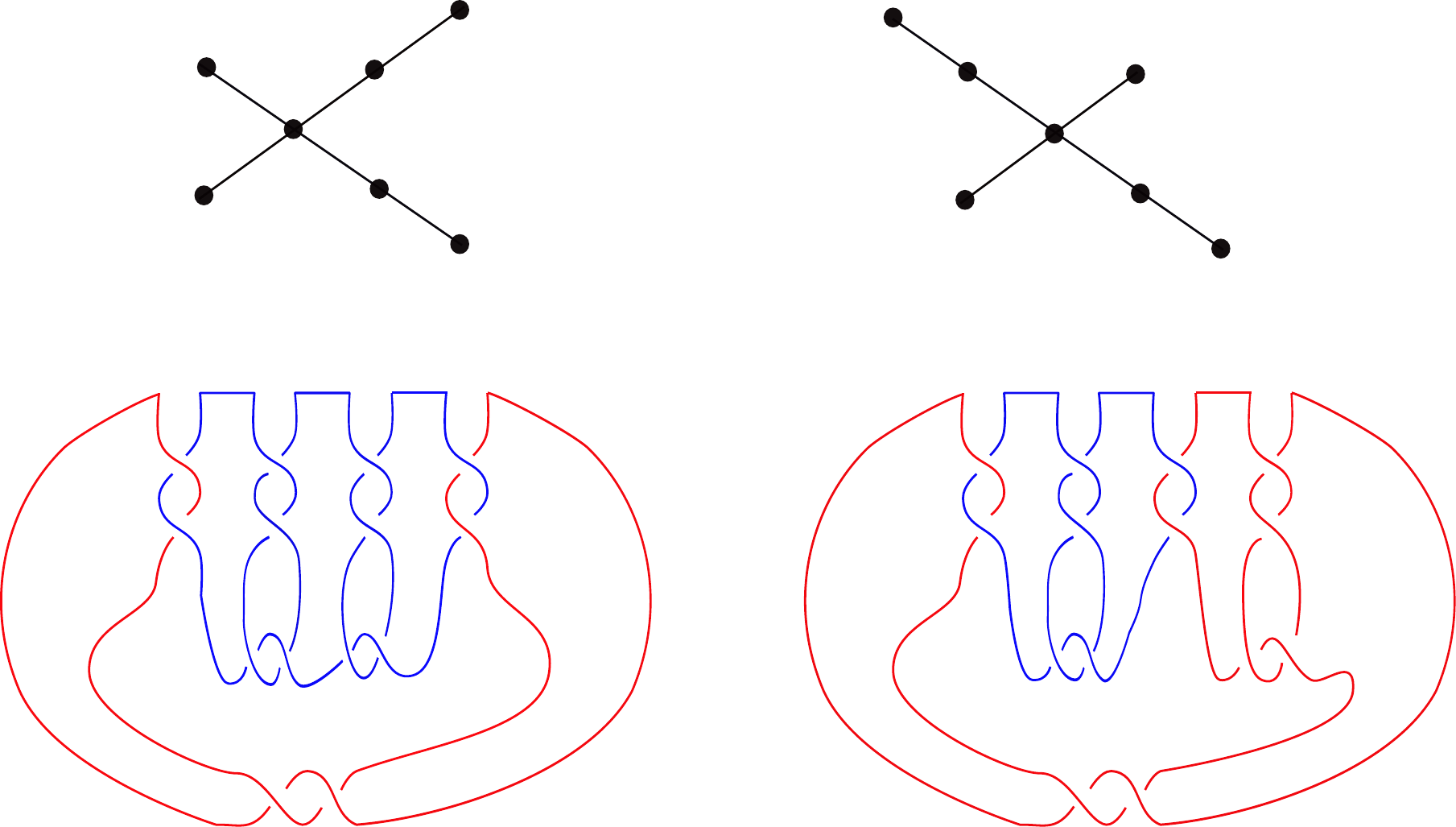} 
   \caption{}
   \label{tree2-fig}
\end{figure}

In general, even if $\Gamma$ does not have a planar realization, it is possible to find a  surface $S$ and a system of simple closed curves
 $\{\gamma_v\}$   in one-to-one correspondence with $V_\Gamma$
such that
\begin{enumerate}
\item the  intersection matrix of the $\gamma_v$ equals the adjacency matrix for $V_\Gamma$; and
\item the complementary components of the union of $\gamma_v$ are either disks or boundary parallel
annuli
\end{enumerate}
(see, e.g.~\cite{Hi2}).
Since $\Gamma$ is bipartite, the system of curves partitions into two multi-curves $\gamma_+$ and $\gamma_-$
that intersect transversally.   Let $\tau_+$ and $\tau_-$ be the positive Dehn twist along $\gamma_+$, respectively,
the negative Dehn twist along $\gamma_-$.   Let $\phi = \tau_+ \tau_-$.   We call $(S,\phi)$ a {\it geometric realization}
of $(\Gamma,\mathfrak s)$.

\begin{lem}\label{roots-lem} Let $E$ be the set of eigenvalues of $-C_{+-}$ and let $F$ be the set of eigenvalues of the
homological action of $\phi$.  Then 
$$
F \setminus \{1\} \subset E \setminus \{1\}.
$$
\end{lem}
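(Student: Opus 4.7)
Let $V \subseteq H_1(S;\R)$ be the subspace spanned by the classes $[\gamma_v]$, $v \in V_\Gamma$, and let $p : \R^{V_\Gamma} \to V$ be the surjection $[v] \mapsto [\gamma_v]$. The plan is to establish two facts: (i) $\phi_*$ preserves $V$ and acts as the identity on the quotient $H_1(S;\R)/V$, and (ii) $\phi_*|_V$ is conjugate to a quotient action of $-C_{+-}$. Combining these yields $F \subseteq \{1\} \cup E$, which is equivalent to the stated inclusion.

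For (i) I would invoke the Picard--Lefschetz formula $T_\gamma(h) = h \pm \langle h, \gamma\rangle [\gamma]$: every Dehn twist appearing in $\tau_+$ or $\tau_-$ changes any homology class by a scalar multiple of some $[\gamma_v] \in V$. Therefore $\phi_* = \tau_+\tau_-$ leaves $V$ invariant and acts as the identity on $H_1(S;\R)/V$; in particular every eigenvalue of $\phi_*$ on the quotient is equal to $1$.

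For (ii) I would orient the curves $\gamma_v$ consistently with the bipartition so that their signed intersection numbers realize the off-diagonal block $X$ appearing in the matrix expression for $C_+$ in the proof of Proposition~\ref{bicolour-prop}; this is possible precisely because $\Gamma$ is bipartite. Expanding the Picard--Lefschetz formula in the spanning set $\{[\gamma_v]\}$ produces a matrix $M$ for $\phi_*|_V$, and a direct block-matrix computation shows that the endomorphism $\tilde\phi$ of $\R^{V_\Gamma}$ with matrix $M$ is conjugate to $-C_{+-}$ via the diagonal sign matrix $\mathrm{diag}(\mathfrak{s}(v))$. By construction $p \circ \tilde\phi = \phi_*|_V \circ p$, so $\ker p$ is $\tilde\phi$-invariant and $\phi_*|_V$ is the induced quotient action; its characteristic polynomial therefore divides that of $\tilde\phi$, whose spectrum is $E$. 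The principal technical hurdle is the sign bookkeeping in matching $M$ with the conjugate of $-C_{+-}$; this essentially repeats the computation already carried out for Coxeter links in Proposition~\ref{homdil-prop}, the new feature being only that $p$ need not be an isomorphism when $S$ carries extra homology not generated by the $\gamma_v$.
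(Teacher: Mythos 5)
Your proposal is correct and follows essentially the same route as the paper: the intertwining map $p:\R^{V_\Gamma}\to H_1(S;\R)$, $[v]\mapsto[\gamma_v]$, conjugates (a lift of) $\phi_*|_V$ to $-C_{+-}$ as in Proposition~\ref{homdil-prop}, the kernel of $p$ can only delete eigenvalues, and the cokernel contributes only the eigenvalue $1$. Your justification of the last point --- Picard--Lefschetz gives $\phi_*(h)-h\in V$ for every $h$, so the induced action on $H_1(S;\R)/V$ is the identity --- is a slightly cleaner alternative to the paper's observation that the cokernel is generated by boundary-parallel classes fixed by $\phi_*$, but the argument is otherwise the same.
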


\begin{proof} The proof follows along the same lines as the proof of Proposition~\ref{homdil-prop}, the only difference
being that the horizontal arrows in the commutative diagram need not be one-to-one or onto. The cokernel is generated
by boundary parallel curves whose homology classes are fixed  by $\phi_*$, hence their homology classes are contained
in the eigenspace for $1$.
\end{proof}

Let $(S,\phi)$ be a geometric realization of an alternating-sign Coxeter graph $(\Gamma,\mathfrak s)$.  Then
the eigenvalues of the homological action of $\phi$ are real and strictly positive by Proposition~\ref{bicolour-prop} and Lemma~\ref{roots-lem}.  
This implies Theorem~\ref{realroots-thm}. Similarly, Theorem~\ref{goldenmean-thm} follows directly from  Proposition~\ref{minimum-prop} and Lemma~\ref{roots-lem}.  

Combining Proposition~\ref{interlacing-prop} with Corollary~\ref{Alexcor}, we also have the following interlacing result.

\begin{thm}\label{interlacing-thm}  If $K'$ and $K$ are alternating-sign Coxeter links associated to $\Gamma'$ and $\Gamma$,
respectively, where $\Gamma'$ is a vertex extension of $\Gamma$, then the roots of the 
Alexander polynomial of $K'$ and that of $K$ are interlacing.
\end{thm}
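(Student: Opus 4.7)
The plan is to combine Corollary~\ref{Alexcor} with Proposition~\ref{interlacing-prop} essentially verbatim, with only a cosmetic sign adjustment. By Corollary~\ref{Alexcor}, the Alexander polynomial of a Coxeter link associated to $(\Gamma,\mathfrak{s})$ satisfies $\Delta(t) = c(-t)$, where $c(t)$ is the characteristic polynomial of the bipartite Coxeter transformation $C_{+-}(\Gamma)$. Hence the roots of $\Delta_K(t)$ are precisely the negatives of the eigenvalues of $C_{+-}(\Gamma)$, and likewise the roots of $\Delta_{K'}(t)$ are the negatives of the eigenvalues of $C_{+-}(\Gamma')$. By Proposition~\ref{bicolour-prop} all these eigenvalues are real and strictly negative, so both the eigenvalue sequences and the root sequences under consideration are sequences of real numbers.

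Next, I invoke Proposition~\ref{interlacing-prop}: since $\Gamma'$ is a vertex extension of $\Gamma$, the eigenvalues $\alpha_1 \leq \dots \leq \alpha_s$ of $C_{+-}(\Gamma)$ and the eigenvalues $\beta_1 \leq \dots \leq \beta_{s+1}$ of $C_{+-}(\Gamma')$ interlace, i.e.
$$
\beta_1 \leq \alpha_1 \leq \beta_2 \leq \dots \leq \alpha_s \leq \beta_{s+1}.
$$
Negating and reversing the order of each sequence, the roots $\mu_i = -\alpha_{s+1-i}$ of $\Delta_K(t)$ and $\nu_j = -\beta_{s+2-j}$ of $\Delta_{K'}(t)$, now listed in increasing order, satisfy
$$
\nu_1 \leq \mu_1 \leq \nu_2 \leq \dots \leq \mu_s \leq \nu_{s+1},
$$
which is the asserted interlacing.

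There is no genuine obstacle; the only thing to note is that reversing and negating a real interlacing sequence preserves the interlacing property, so the combinatorics of the inequalities go through unchanged. The whole argument is just the translation of Proposition~\ref{interlacing-prop} through the sign change $t \mapsto -t$ supplied by Corollary~\ref{Alexcor}.
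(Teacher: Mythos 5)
Your proposal is correct and follows exactly the paper's route: the paper proves this theorem in one line by "combining Proposition~\ref{interlacing-prop} with Corollary~\ref{Alexcor}," which is precisely the argument you spell out. The only content you add is the (true and worth noting) observation that negating a real interlacing pair of sequences preserves interlacing.
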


%
%
%

\end{document}